\newtheorem{theorem}{Theorem}[section]
\newtheorem{lemma}[theorem]{Lemma}
\newtheorem{corollary}[theorem]{Corollary}
\newtheorem{proposition}[theorem]{Proposition}
\theoremstyle{definition}
\newtheorem{definition}[theorem]{Definition}
\theoremstyle{remark}
\newtheorem{remark}[theorem]{Remark}
\numberwithin{equation}{section}
\begin{document}

\title[The Duality Theory of Fractional Calculus]{%
The Duality Theory of Fractional Calculus\\ 
and a New Fractional Calculus of Variations Involving Left Operators Only}

\author[D. F. M. Torres]{Delfim F. M. Torres\\
(ORCID: 0000-0001-8641-2505)}

\address{Center for Research and Development in Mathematics and Applications (CIDMA), 
Department of Mathematics, University of Aveiro, 3810-193 Aveiro, Portugal}

\email{delfim@ua.pt}

\subjclass[2020]{34A08, 49K05, 49S05}

\date{Please cite this paper as follows:
D. F. M. Torres, The duality theory of fractional calculus 
and a new fractional calculus of variations involving left operators only,
Mediterr. J. Math. 21 (2024), no.~3, Paper No.~106, 16~pp.
(\url{https://doi.org/10.1007/s00009-024-02652-x}).}


\keywords{Duality, fractional calculus,
integration by parts, fractional calculus of variations,
Euler--Lagrange equations, dissipative systems.}

\begin{abstract}
Through duality it is possible to transform left 
fractional operators into right fractional operators and vice versa. 
In contrast to existing literature, we establish integration 
by parts formulas that exclusively involve either left or right operators. 
The emergence of these novel fractional integration by parts formulas inspires 
the introduction of a new calculus of variations, where only one type of 
fractional derivative (left or right) is present. This applies to both the 
problem formulation and the corresponding necessary optimality conditions.
As a practical application, we present a new Lagrangian that relies solely 
on left-hand side fractional derivatives. The fractional variational principle 
derived from this Lagrangian leads us to the equation of motion 
for a dissipative/damped system.
\end{abstract}

\maketitle


\section{Introduction}

Fractional differentiation means ``differentiation of arbitrary order''.
Its origin goes back more than 325 years, when in 1695 L'Hopital asked 
Leibniz the meaning of $\frac{d^{n}x}{dt^{n}}$ for $n=1/2$.
After that, many famous mathematicians, like Fourier,
Abel, Liouville, and Riemann, among others, contributed to
the development of Fractional Calculus \cite{MR1347689}.

In 1931, Bauer proved that it is impossible to use a
variational principle to derive a linear dissipative
equation of motion with constant coefficients \cite{Bauer:1931}.
Bauer's result expresses the well-known belief that there is no direct
method of applying variational principles to nonconservative
systems, which are characterized by friction or other dissipative
processes. It turns out that fractional derivatives provide 
an elegant solution to the problem.
Indeed, the proof of Bauer relies, implicitly, 
on the assumption that derivatives are of integer order.

The Fractional Calculus of Variations (FCoV) was born in 1996--1997
with the work of Riewe \cite{CD:Riewe:1996,CD:Riewe:1997}, 
precisely with the aim to obtain a Lagrangian
for a simple dissipative system with a damping force
proportional to the velocity. Riewe proposed to represent 
the dissipative effects with a Lagrangian dependent
on left and right fractional Riemann--Liouville derivatives 
of order $1/2$, showing that in such case one can obtain the equations 
of motion of a dissipative linear system with constant coefficients 
by a (fractional) variational principle.
Riewe's idea is very simple and natural:
if the Lagrangian contains a term proportional 
to $\left(\frac{d^{n}x}{dt^{n}}\right)^2$, then the respective 
Euler--Lagrange equation has a corresponding 
term proportional to $\frac{d^{2n}x}{dt^{2n}}$. Therefore,
a damping force of the form $c \frac{dx}{dt}$ should
follow from a Lagrangian containing a term
proportional to $\left(\frac{d^{1/2}x}{dt^{1/2}}\right)^2$. 
The FCoV has therefore significant importance 
in physics and engineering, as a means 
to circumvent Bauer's result \cite{MR4085516}.
This explains the fact why FCoV is under strong development. 
For those interested on the subject, we refer to the 
books \cite{MR3443073,MR3822307,MR3331286,MR2984893}
and the survey papers \cite{MR3888408,MR3221831}.

The Lagrangian proposed by Riewe marked the beginning of several discussions 
in the literature, in particular among physicists, due to the presence 
of right fractional derivatives and other related issues. 
For a good and recent account about the fractional Lagrangian 
proposed by Riewe and other proposals by different mathematicians, 
we refer the reader to \cite{MR3390552}. Here we remark
that, to the best of our knowledge, all such fractional Lagrangians 
involve always a right derivative, which
causes some physical concerns of non-causality. 
From a strictly mathematical point of view, 
however, the right operators appear naturally in the FCoV due 
to the central role of integration by parts in the proof of the 
Euler--Lagrange necessary optimality conditions.
Indeed, under fractional integration by parts, a left fractional operator
is transformed into a right fractional operator and vice versa 
(see Section~\ref{sec:02}), so even if
the Lagrangian only includes left operators, then the right
operators will appear in the Euler--Lagrange equation.
To circumvent the undesirable phenomenon of appearance 
of right-fractional operators either on the Lagrangian or the
Euler--Lagrange equation, here we develop the theory of duality 
for fractional calculus (see Section~\ref{sec:3.1}), 
as formulated by Caputo and Torres in 2015 \cite{MyID:307}, 
with early contributions discussed in \cite{MR1347689}.

The duality theory gives a way to express 
left fractional operators in terms of right fractional
operators and the other way around. In simple terms, the duality 
theory shows that the right fractional operators 
of a function are the dual of the left operators of the dual function or, 
equivalently, the left fractional derivative/integral of a
function is the dual of the right fractional derivative/integral 
of the dual function. Here we further develop and use such duality 
theory to derive new fractional integration by parts formulas
involving only left (or right) fractional operators (see Section~\ref{sec:3.2}). 
With the help of the new fractional integration by parts formulas, 
as well as results relating fractional derivatives
with classical ones (see Section~\ref{sec:fromFDtoCD}),
we provide a new perspective to the FCoV by proposing 
a new FCoV where both Lagrangians and respective 
Euler--Lagrange equations involve only one type
of operators, e.g., both variational problem and respective necessary
optimality conditions with left-hand-side operators only
(see Section~\ref{sec:newFCoV}). As an application of the obtained results, 
we go back to the original motivation behind the seminal papers of Riewe 
and provide a new Lagrangian that circumvents the Bauer theorem, 
for which the respective Euler--Lagrange equation 
coincide with the equation of motion of a dissipative system
(see Section~\ref{sec:ex}). In contrast with the example of Riewe, 
and others available in the literature, our quadratic Lagrangian 
for the linear friction problem is a real valued Lagrangian involving 
let-hand side fractional derivatives only. This provides 
a better physical meaning to the proposed Lagrangian, in contrast 
with the ones available in the literature. We end up with
a conclusion, summarizing the main contributions of the paper 
(see Section~\ref{sec:conc}).


\section{Preliminaries}
\label{sec:02}

We recall some well known definitions and results
from fractional calculus, fixing also our notations. 
Here we only give the notions and results
that will be useful in the sequel.
For more details we refer to the book \cite{MR2984893},
where all such definitions and results are found,
and references therein.

\begin{definition}[Fractional integrals of order $\alpha$]
\label{def:2.1}
Let $\varphi \in L^1([a, b],\mathbb{R})$. The integrals
\begin{equation}
\label{eq:01}
\left(\mathcal{I}^{\alpha}_{a+} \varphi\right)(t)
= \frac{1}{\Gamma(\alpha)} 
\int_{a}^{t} \varphi(\tau) (t-\tau)^{\alpha-1} d\tau,
\quad t > a,
\end{equation}
and
\begin{equation}
\label{eq:02}
\left(\mathcal{I}^{\alpha}_{b-} \varphi\right)(t)
= \frac{1}{\Gamma(\alpha)} 
\int_{t}^{b} \varphi(\tau) (\tau-t)^{\alpha-1} d\tau,
\quad t < b,
\end{equation}
where $\alpha > 0$ and $\Gamma(\cdot)$ is the Gamma function,
are called, respectively, the left and the right fractional integrals 
of order $\alpha$. Additionally, we define
$$
\mathcal{I}^{0}_{a+} \varphi
= \mathcal{I}^{0}_{b-} \varphi
= \varphi,
$$
that is, for $\alpha = 0$ \eqref{eq:01} and \eqref{eq:02} are the identity operator. 
\end{definition}

\begin{proposition}[Integration by parts for fractional integrals]
\label{prop:2.1}
Let $\alpha > 0$ and $1/p + 1/q \leq 1 + \alpha$, $p \geq 1$, $q \geq 1$, 
with $p \neq 1$ and $q \neq 1$ in the case $1/p + 1/q = 1 + \alpha$. 
If $\varphi \in L^p([a, b], \mathbb{R})$ and $\psi \in L^q([a, b], \mathbb{R})$,
then the following equality holds:
\begin{equation}
\label{eq:11}
\int_a^b \varphi(\tau) \left(\mathcal{I}^{\alpha}_{a+} \psi\right)(\tau) d\tau
= \int_a^b \left(\mathcal{I}^{\alpha}_{b-} \varphi\right)(\tau) \psi(\tau)d\tau.
\end{equation}
\end{proposition}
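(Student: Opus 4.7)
The plan is to reduce the identity to a straightforward application of Fubini's theorem, once absolute integrability of the relevant double integral has been secured. First I would substitute the definition \eqref{eq:01} into the left-hand side and obtain
\begin{equation*}
\int_a^b \varphi(\tau) \left(\mathcal{I}^{\alpha}_{a+} \psi\right)(\tau) \, d\tau
= \frac{1}{\Gamma(\alpha)} \int_a^b \int_a^\tau
\varphi(\tau)\,\psi(s)\,(\tau-s)^{\alpha-1} \, ds\, d\tau.
\end{equation*}
Interchanging the order of integration over the triangular region $\{(s,\tau): a \leq s \leq \tau \leq b\}$ yields
\begin{equation*}
\frac{1}{\Gamma(\alpha)} \int_a^b \psi(s) \int_s^b \varphi(\tau)\,(\tau-s)^{\alpha-1}\, d\tau\, ds
= \int_a^b \psi(s)\,\left(\mathcal{I}^{\alpha}_{b-}\varphi\right)(s)\, ds,
\end{equation*}
which is exactly \eqref{eq:11}. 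So mechanically the identity is a one-line Fubini computation.

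The real work, and where I would expect the main obstacle to sit, is in verifying the hypotheses of Fubini, i.e., showing that the iterated integral is absolutely convergent under the stated conditions on $p$, $q$, and $\alpha$. The natural route is to bound
\begin{equation*}
\int_a^b \int_a^\tau |\varphi(\tau)|\,|\psi(s)|\,(\tau-s)^{\alpha-1}\,ds\,d\tau
\leq \int_a^b |\varphi(\tau)|\,\left(\mathcal{I}^{\alpha}_{a+}|\psi|\right)(\tau)\,d\tau \cdot \Gamma(\alpha),
\end{equation*}
and then to apply H\"older's inequality together with a Hardy--Littlewood--Sobolev-type mapping property of $\mathcal{I}^{\alpha}_{a+}$: namely, that $\mathcal{I}^{\alpha}_{a+}$ sends $L^q([a,b])$ boundedly into $L^{p'}([a,b])$ whenever $1/q - 1/p' \leq \alpha$, which is precisely the hypothesis $1/p + 1/q \leq 1+\alpha$. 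The strict-inequality exclusions $p\neq 1$ and $q\neq 1$ at the endpoint $1/p+1/q = 1+\alpha$ reflect the usual failure of boundedness of the Riesz-type potential at the $L^1$ endpoint, and would be handled by invoking the appropriate endpoint boundedness (or equivalently its right-sided analogue, by symmetry of the argument).

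Once this integrability bound is in place, Fubini applies, the triangular swap is legitimate, and the two expressions are seen to coincide. I would therefore structure the proof as: (i) state/recall the $L^p \to L^{p'}$ mapping estimate for $\mathcal{I}^{\alpha}_{a+}$ under the given exponent condition; (ii) deduce absolute convergence of the double integral by H\"older; (iii) conclude \eqref{eq:11} by interchanging the order of integration and recognizing $\mathcal{I}^{\alpha}_{b-}\varphi$ in the inner integral.
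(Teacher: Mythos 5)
Your argument is correct: the triangular Fubini swap combined with the Hardy--Littlewood mapping property of $\mathcal{I}^{\alpha}_{a+}$ (from $L^q$ into $L^{p'}$ under $1/p+1/q\leq 1+\alpha$, with the usual endpoint exclusions) is exactly the classical proof of this identity as given in Samko--Kilbas--Marichev. The paper itself states Proposition~\ref{prop:2.1} as a recalled preliminary and offers no proof, deferring to the cited literature, so there is nothing in the paper to diverge from; your reconstruction matches the standard source.
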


\begin{proposition}
\label{prop:2.2}
If $f \in L^p([a, b], \mathbb{R})$, 
$1 \leq p \leq \infty$, $\alpha > 0$ and $\beta > 0$, then
\begin{equation}
\label{eq:03}
\left(\mathcal{I}^{\alpha}_{a+} \left(\mathcal{I}^{\beta}_{a+} f\right)\right)(t)
= \left(\mathcal{I}^{\alpha+\beta}_{a+} f\right)(t)
\end{equation}
almost everywhere on $[a, b]$.
\end{proposition}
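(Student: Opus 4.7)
The plan is to prove the semigroup property directly from the definition via Fubini's theorem and an evaluation of the inner integral in terms of the Beta function. First I would unfold the left-hand side using \eqref{eq:01} twice, obtaining
\begin{equation*}
\left(\mathcal{I}^{\alpha}_{a+}\left(\mathcal{I}^{\beta}_{a+} f\right)\right)(t)
= \frac{1}{\Gamma(\alpha)\Gamma(\beta)}
\int_a^t (t-\tau)^{\alpha-1}\!\!\int_a^\tau (\tau-s)^{\beta-1} f(s)\, ds\, d\tau.
\end{equation*}
The double integral is over the triangular region $\{(s,\tau): a \le s \le \tau \le t\}$, and the plan is to interchange the order of integration so that $s$ runs over $[a,t]$ and $\tau$ over $[s,t]$.

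Next, for each fixed $s$ I would evaluate the inner integral
$\int_s^t (t-\tau)^{\alpha-1}(\tau-s)^{\beta-1}\, d\tau$ by the substitution $\tau = s + u(t-s)$, which yields
\begin{equation*}
\int_s^t (t-\tau)^{\alpha-1}(\tau-s)^{\beta-1}\, d\tau
= (t-s)^{\alpha+\beta-1}\int_0^1 (1-u)^{\alpha-1} u^{\beta-1}\, du
= (t-s)^{\alpha+\beta-1}\, B(\alpha,\beta).
\end{equation*}
Using the classical identity $B(\alpha,\beta) = \Gamma(\alpha)\Gamma(\beta)/\Gamma(\alpha+\beta)$ cancels the gamma factors in the prefactor, leaving precisely
$\frac{1}{\Gamma(\alpha+\beta)} \int_a^t (t-s)^{\alpha+\beta-1} f(s)\, ds = \left(\mathcal{I}^{\alpha+\beta}_{a+} f\right)(t)$, which is the desired equality.

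The main obstacle — the only non-algebraic step — is justifying the application of Fubini's theorem for $f \in L^p([a,b],\mathbb{R})$ with $1\le p \le \infty$, and explaining why the equality holds \emph{almost everywhere} rather than everywhere. The plan is to verify absolute integrability of $(\tau,s)\mapsto (t-\tau)^{\alpha-1}(\tau-s)^{\beta-1}|f(s)|$ over the triangle: the kernel $(t-\tau)^{\alpha-1}(\tau-s)^{\beta-1}$ is nonnegative and, by the very Beta-function computation above, its partial integral in $\tau$ equals $B(\alpha,\beta)(t-s)^{\alpha+\beta-1}$, which is locally integrable in $s$ on $[a,t]$; combined with H\"older's inequality against $|f|\in L^p$, this shows the iterated integral is finite for almost every $t\in[a,b]$, so Fubini applies and the interchange is legitimate. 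The ``almost everywhere'' qualification simply reflects the fact that $\mathcal{I}^{\beta}_{a+} f$ is defined only up to a null set when $f\in L^p$. This completes the plan.
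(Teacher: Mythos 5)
Your proof is correct: it is the classical Dirichlet/Fubini argument reducing the iterated kernel to the Beta integral, and since the kernel is nonnegative Tonelli's theorem justifies the interchange (note that on the bounded interval $[a,b]$ one has $L^p \subset L^1$, so H\"older is not even needed and the a.e.\ finiteness follows directly from $\mathcal{I}^{\alpha+\beta}_{a+}|f| \in L^1$). The paper itself offers no proof to compare against --- Proposition~\ref{prop:2.2} is recalled as a known preliminary from the cited book of Malinowska and Torres --- and your argument is the standard one given there.
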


\begin{definition}[Left/right Riemann--Liouville fractional derivatives of order $\alpha$]
\label{def:2.2}
Let $0 < \alpha < 1$ and $\varphi \in W^{1,1}([a, b], \mathbb{R})$. 
The left Riemann--Liouville derivative of order $\alpha$ is defined by
\begin{equation}
\label{eq:05a}
\left(\mathcal{D}^{\alpha}_{a+} \varphi\right)(t)
= \frac{1}{\Gamma(1-\alpha)} \frac{d}{dt}
\int_{a}^{t} \varphi(\tau) (t-\tau)^{-\alpha} d\tau,
\end{equation}
while the right Riemann--Liouville derivative of order
$\alpha$ is defined by
\begin{equation}
\label{eq:06a}
\left(\mathcal{D}^{\alpha}_{b-} \varphi\right)(t)
= -\frac{1}{\Gamma(1-\alpha)} \frac{d}{dt}
\int_{t}^{b} \varphi(\tau) (\tau-t)^{-\alpha} d\tau.
\end{equation}
For $\alpha = 0$ we have $\mathcal{D}^{0}_{a+} \varphi 
= \mathcal{D}^{0}_{b-} \varphi = \varphi$; 
while for $\alpha = 1$ one has 
$\left(\mathcal{D}^{1}_{a+} \varphi\right)(t) 
= \frac{d}{dt} \varphi(t)$
and $\left(\mathcal{D}^{1}_{b-} \varphi\right)(t) 
= -\frac{d}{dt} \varphi(t)$.
\end{definition}

\begin{remark}
\label{rem:2.6}
Let $\alpha \in [0, 1]$. For functions  
$\varphi \in W^{1,1}([a, b], \mathbb{R})$,
each of the following expressions hold:
\begin{equation}
\label{eq:05b}
\left(\mathcal{D}^{\alpha}_{a+} \varphi\right)(t)
= \frac{d}{dt} \left[\left(\mathcal{I}^{1-\alpha}_{a+} \varphi\right)(t)\right] 
\end{equation}
and 
\begin{equation}
\label{eq:06b}
\left(\mathcal{D}^{\alpha}_{b-} \varphi\right)(t)
= -\frac{d}{dt} \left[\left(\mathcal{I}^{1-\alpha}_{b-} \varphi\right)(t)\right]. 
\end{equation}
\end{remark}

The following properties can be found, for example, 
in Propositions 2.4 and 2.5 of \cite{MR2984893}.
They are valid under appropriate space of functions 
$\mathcal{I}^{\alpha}_{a+} \left(L^p(a,b)\right)$
and $\mathcal{I}^{\alpha}_{b-} \left(L^p(a,b)\right)$, 
$1 \leq p \leq \infty$, defined respectively by
$$
\mathcal{I}^{\alpha}_{a+} \left(L^p(a,b)\right)
:= \left\{\varphi : \varphi(t) = \left(\mathcal{I}^{\alpha}_{a+} \psi\right)(t), 
\psi \in L^p([a, b],\mathbb{R})\right\}
$$
and
$$
\mathcal{I}^{\alpha}_{b-} \left(L^p(a,b)\right)
:= \left\{\varphi : \varphi(t) = \left(\mathcal{I}^{\alpha}_{b-} \psi\right)(t), 
\psi \in L^p([a, b],\mathbb{R})\right\}.
$$

\begin{proposition}
\label{prop:2.4}
If $\alpha > 0$, then 
\begin{equation}
\label{eq:07}
\left(\mathcal{D}^{\alpha}_{a+} \left(\mathcal{I}^{\alpha}_{a+} \varphi\right)\right)(t)
= \varphi(t)
\end{equation}
for any $\varphi \in L^1([a, b],\mathbb{R})$, while 
\begin{equation}
\label{eq:08}
\left(\mathcal{I}^{\alpha}_{a+} \left(\mathcal{D}^{\alpha}_{a+} \varphi\right)\right)(t)
= \varphi(t)
\end{equation}
is satisfied for $\varphi \in 
\mathcal{I}^{\alpha}_{a+} \left(L^1(a,b)\right)$.
\end{proposition}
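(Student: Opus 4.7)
The plan is to derive both identities from the semigroup property of fractional integrals (Proposition~\ref{prop:2.2}) together with the alternative expression for the Riemann--Liouville derivative given in Remark~\ref{rem:2.6}, and then to bootstrap the second identity from the first.

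For \eqref{eq:07}, I would first rewrite the left derivative via \eqref{eq:05b} so that
$$
\left(\mathcal{D}^{\alpha}_{a+}\left(\mathcal{I}^{\alpha}_{a+}\varphi\right)\right)(t)
= \frac{d}{dt}\left[\left(\mathcal{I}^{1-\alpha}_{a+}\left(\mathcal{I}^{\alpha}_{a+}\varphi\right)\right)(t)\right].
$$
The key step is to invoke the semigroup property \eqref{eq:03} of Proposition~\ref{prop:2.2}, applied with exponents $1-\alpha$ and $\alpha$, which collapses the nested integral to $\mathcal{I}^{1}_{a+}\varphi$, that is, the ordinary primitive $\int_a^t \varphi(\tau)\,d\tau$. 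Differentiating and applying the Lebesgue differentiation theorem for $L^1$ functions then yields $\varphi(t)$ almost everywhere on $[a,b]$. One must be careful that the intermediate application of $\mathcal{I}^{1-\alpha}_{a+}$ to $\mathcal{I}^{\alpha}_{a+}\varphi$ is justified: since $\varphi\in L^1$, we have $\mathcal{I}^{\alpha}_{a+}\varphi\in L^1$ as well (by Young's convolution inequality applied to \eqref{eq:01}), so Proposition~\ref{prop:2.2} applies.

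For \eqref{eq:08}, the assumption $\varphi\in\mathcal{I}^{\alpha}_{a+}(L^1(a,b))$ furnishes a representing function $\psi\in L^1([a,b],\mathbb{R})$ with $\varphi=\mathcal{I}^{\alpha}_{a+}\psi$. Substituting and using \eqref{eq:07} already established for $\psi$, I would compute
$$
\left(\mathcal{I}^{\alpha}_{a+}\left(\mathcal{D}^{\alpha}_{a+}\varphi\right)\right)(t)
= \left(\mathcal{I}^{\alpha}_{a+}\left(\mathcal{D}^{\alpha}_{a+}\left(\mathcal{I}^{\alpha}_{a+}\psi\right)\right)\right)(t)
= \left(\mathcal{I}^{\alpha}_{a+}\psi\right)(t)
= \varphi(t),
$$
which closes the argument.

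The main obstacle is the first identity, and specifically the delicate point of interchanging the outer derivative with the semigroup identity at the $L^1$ level: the semigroup identity in Proposition~\ref{prop:2.2} holds only almost everywhere, so one must verify that after applying it inside $\frac{d}{dt}$ the result $\mathcal{I}^{1}_{a+}\varphi$ is indeed the absolutely continuous primitive of $\varphi$, so that termwise differentiation yields $\varphi$ almost everywhere rather than only in a distributional sense. The second identity is then essentially formal, provided one is careful to note that $\mathcal{I}^{\alpha}_{a+}$ is applied to an $L^1$ function $\mathcal{D}^{\alpha}_{a+}\varphi$ that coincides almost everywhere with $\psi$ by \eqref{eq:07}.
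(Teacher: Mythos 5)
Your argument is correct. Note that the paper itself offers no proof of Proposition~\ref{prop:2.4}: it is quoted from the literature (Propositions~2.4 and 2.5 of \cite{MR2984893}), so there is nothing to compare against; your derivation is the standard textbook one and would serve as a self-contained proof. Two small remarks. First, the delicate point you flag at the end is in fact harmless in this special case: for exponents summing to $1$, the composition $\mathcal{I}^{1-\alpha}_{a+}\mathcal{I}^{\alpha}_{a+}\varphi$ equals $\int_a^t\varphi(\tau)\,d\tau$ for \emph{every} $t$, not merely almost every $t$, because the Fubini interchange produces the inner integral $\int_\tau^t (s-\tau)^{\alpha-1}(t-s)^{-\alpha}\,ds = B(\alpha,1-\alpha)=\Gamma(\alpha)\Gamma(1-\alpha)$, a constant, and the double integral of the absolute value is finite for all $t$; hence the outer $\frac{d}{dt}$ acts on a genuinely absolutely continuous primitive and Lebesgue differentiation applies. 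Second, your use of \eqref{eq:05b} for $\mathcal{I}^{\alpha}_{a+}\varphi$, which need not lie in $W^{1,1}$, is legitimate because \eqref{eq:05b} is literally the definition \eqref{eq:05a} rewritten; and the restriction to $0<\alpha<1$ is consistent with the paper, which only defines $\mathcal{D}^{\alpha}_{a+}$ for that range (for general $\alpha>0$ the same argument runs with $\frac{d^n}{dt^n}\mathcal{I}^{n-\alpha}_{a+}$, $n=\lceil\alpha\rceil$).
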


\begin{proposition}[Integration by parts for fractional derivatives]
\label{prop:2.5}
Let $0 < \alpha < 1$. If $\varphi \in \mathcal{I}^{\alpha}_{b-} \left(L^p(a,b)\right)$
and $\psi \in \mathcal{I}^{\alpha}_{a+} \left(L^q(a,b)\right)$ 
with $1/p + 1/q \leq 1 + \alpha$, then the following equality holds:
\begin{equation}
\label{eq:12}
\int_a^b \varphi(\tau) \left(\mathcal{D}^{\alpha}_{a+} \psi\right)(\tau) d\tau
= \int_a^b \left(\mathcal{D}^{\alpha}_{b-} \varphi\right)(\tau) \psi(\tau)d\tau.
\end{equation}
\end{proposition}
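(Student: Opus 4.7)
The plan is to reduce Proposition~\ref{prop:2.5} to Proposition~\ref{prop:2.1} by using the fact that, on the domains $\mathcal{I}^{\alpha}_{a+}(L^q(a,b))$ and $\mathcal{I}^{\alpha}_{b-}(L^p(a,b))$, the fractional derivative operators $\mathcal{D}^{\alpha}_{a+}$ and $\mathcal{D}^{\alpha}_{b-}$ act as left inverses of the corresponding fractional integral operators, which is precisely the content of Proposition~\ref{prop:2.4} (and its right-sided analogue).

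First, I would unwrap the hypotheses. Since $\varphi \in \mathcal{I}^{\alpha}_{b-}(L^p(a,b))$, there exists $f \in L^p([a,b],\mathbb{R})$ with $\varphi = \mathcal{I}^{\alpha}_{b-} f$, and similarly there exists $g \in L^q([a,b],\mathbb{R})$ such that $\psi = \mathcal{I}^{\alpha}_{a+} g$. Applying Proposition~\ref{prop:2.4} (and the completely analogous statement for right operators, which is also recorded in the reference \cite{MR2984893} cited just before Proposition~\ref{prop:2.4}), we immediately obtain
\begin{equation*}
\left(\mathcal{D}^{\alpha}_{a+} \psi\right)(t) = g(t)
\quad \text{and} \quad
\left(\mathcal{D}^{\alpha}_{b-} \varphi\right)(t) = f(t).
\end{equation*}

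Substituting these identities, the left-hand side of \eqref{eq:12} becomes
\begin{equation*}
\int_a^b \varphi(\tau)\left(\mathcal{D}^{\alpha}_{a+}\psi\right)(\tau)\,d\tau
= \int_a^b \left(\mathcal{I}^{\alpha}_{b-} f\right)(\tau)\, g(\tau)\,d\tau,
\end{equation*}
while the right-hand side of \eqref{eq:12} becomes
\begin{equation*}
\int_a^b \left(\mathcal{D}^{\alpha}_{b-}\varphi\right)(\tau)\psi(\tau)\,d\tau
= \int_a^b f(\tau)\left(\mathcal{I}^{\alpha}_{a+} g\right)(\tau)\,d\tau.
\end{equation*}
The condition $1/p + 1/q \leq 1 + \alpha$ together with $f \in L^p$ and $g \in L^q$ is exactly what is required to invoke Proposition~\ref{prop:2.1}, which asserts the equality of these two integrals (after a harmless relabeling $\varphi \leftrightarrow f$, $\psi \leftrightarrow g$ in \eqref{eq:11}). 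This yields \eqref{eq:12}.

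The only delicate point is making sure Proposition~\ref{prop:2.4} legitimately applies in both directions: the cited proposition gives $\mathcal{D}^{\alpha}_{a+}\circ \mathcal{I}^{\alpha}_{a+} = \mathrm{id}$, which handles $\psi$; for $\varphi$ one needs the mirror identity $\mathcal{D}^{\alpha}_{b-}\circ \mathcal{I}^{\alpha}_{b-} = \mathrm{id}$, which is the symmetric statement obtained by the substitution $t \mapsto a+b-t$. Assuming this, as is standard in the reference \cite{MR2984893}, the rest of the argument is a direct substitution and a single application of Proposition~\ref{prop:2.1}; no further analytic work is required. The main (and only) obstacle is thus purely bookkeeping — ensuring that the representability hypotheses $\varphi = \mathcal{I}^{\alpha}_{b-} f$ and $\psi = \mathcal{I}^{\alpha}_{a+} g$ are used exactly once, at the point where the derivatives are converted back to $f$ and $g$.
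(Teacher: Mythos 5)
Your proof is correct. The paper itself states Proposition~\ref{prop:2.5} without proof, as a known preliminary recalled from \cite{MR2984893}, and your argument --- writing $\varphi = \mathcal{I}^{\alpha}_{b-}f$, $\psi = \mathcal{I}^{\alpha}_{a+}g$, cancelling the derivatives against the integrals via Proposition~\ref{prop:2.4} and its right-sided mirror, and then invoking Proposition~\ref{prop:2.1} --- is exactly the standard derivation given in that reference. Your flagging of the need for the right-sided analogue $\mathcal{D}^{\alpha}_{b-}\circ\mathcal{I}^{\alpha}_{b-}=\mathrm{id}$ is the right thing to be careful about, and it holds by the reflection $t\mapsto a+b-t$ (equivalently, by the duality relations of Section~\ref{sec:3.1}). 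No gap.
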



\section{Main Results}
\label{sec:03}

In what follows the $\alpha$'s are always real numbers between $0 < \alpha < 1$.


\subsection{A duality theory for the FCoV}
\label{sec:3.1}

Here we extend the duality of \cite{MyID:307} to the context 
of the calculus of variations.

\begin{definition}[Dual function and the dual operator $*$]
\label{def:01}
Given $f : [a,b] \rightarrow \mathbb{R}$, we define the dual function
$f^* : [a,b] \rightarrow \mathbb{R}$ (and the dual operator $*$) by
\begin{equation}
\label{eq:13}
f^*(t) = f(b-t+a)
\end{equation}
for all $t \in [a,b]$.
\end{definition}

The following properties are straightforward but important.

\begin{proposition}
\label{prop:01} 
For any function $f : [a,b] \rightarrow \mathbb{R}$, let
$f^{**} := \left(f^{*}\right)^*$. Then,
\begin{equation}
\label{eq:13b}
f^{**} = f.
\end{equation}
If $c_1, c_2 \in \mathbb{R}$ and 
$f_1, f_2 : [a,b] \rightarrow \mathbb{R}$, then
\begin{equation}
\label{eq:13c}
\left(f_1 \cdot f_2\right)^* = f_1^* \cdot f_2^*;
\end{equation}
\begin{equation}
\label{prop:02}
\left(c_1 f_1 + c_2 f_2\right)^* = c_1 f_1^* + c_2 f_2^*.
\end{equation}
\end{proposition}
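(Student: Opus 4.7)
The plan is to verify each of the three identities by direct pointwise computation, using only the definition $f^*(t) = f(b-t+a)$ from Definition~\ref{def:01}. Since all three statements are equalities of functions on $[a,b]$, it suffices to fix an arbitrary $t \in [a,b]$ and check that both sides agree at $t$.

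For the involutive property \eqref{eq:13b}, I would compute $f^{**}(t) = (f^*)^*(t) = f^*(b-t+a)$, and then apply the definition once more with the argument $b-t+a$ in place of $t$, obtaining $f^*(b-t+a) = f\bigl(b-(b-t+a)+a\bigr) = f(t)$. The algebraic simplification $b-(b-t+a)+a = t$ is the entire content of the step, and it holds on any interval $[a,b]$.

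For the multiplicativity property \eqref{eq:13c}, I would write $(f_1 \cdot f_2)^*(t) = (f_1 \cdot f_2)(b-t+a) = f_1(b-t+a) f_2(b-t+a) = f_1^*(t) f_2^*(t) = (f_1^* \cdot f_2^*)(t)$, using only the pointwise definition of the product of real-valued functions. The linearity property in \eqref{prop:02} follows in the same way: $(c_1 f_1 + c_2 f_2)^*(t) = c_1 f_1(b-t+a) + c_2 f_2(b-t+a) = c_1 f_1^*(t) + c_2 f_2^*(t)$.

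There is no real obstacle here; the only thing worth double-checking is the arithmetic identity $b-(b-t+a)+a = t$ that underlies the involution, which is a routine cancellation. The proof is therefore essentially a one-line unfolding of definitions for each item, and its importance lies not in its difficulty but in the fact that the involution, multiplicativity, and linearity of $*$ will be invoked freely in the integration-by-parts manipulations of Section~\ref{sec:3.2}.
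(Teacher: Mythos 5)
Your proof is correct and is exactly the direct pointwise verification the paper has in mind; the paper itself dismisses this as ``a trivial exercise,'' and your unfolding of the definition $f^*(t)=f(b-t+a)$, including the key cancellation $b-(b-t+a)+a=t$, supplies precisely the omitted details.
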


\begin{proof}
The proof is a trivial exercise.
\end{proof}

\begin{remark}
\label{cor:04}
An immediate consequence of the linearity given by \eqref{prop:02}
is that the sign change of the dual is the dual of the sign change.
This property will be useful for us to establish a relation between 
left and right fractional derivatives. Precisely,
for any function $f : [a,b] \rightarrow \mathbb{R}$ we have
\begin{equation}
\label{eq:17b}
\left(-f\right)^* = -f^*.
\end{equation}
\end{remark}

\begin{proposition}
\label{prop:04}
Under integration, the following relation holds:
\begin{equation}
\label{eq:prop:04}
\int_{a}^{b} f^*(t) \cdot g(t) dt
= \int_{a}^{b} f(t) \cdot g^*(t) dt.
\end{equation}
\end{proposition}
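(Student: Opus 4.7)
The plan is to prove the identity by a single change of variables. Set $\tau = b - t + a$ in the left-hand integral; then $d\tau = -dt$, and the limits of integration $t = a$ and $t = b$ map to $\tau = b$ and $\tau = a$, respectively. Under this substitution, $f^*(t) = f(b-t+a) = f(\tau)$ by Definition~\ref{def:01}, while $g(t) = g(b - \tau + a) = g^*(\tau)$, again by the definition of the dual. The sign flip from $d\tau = -dt$ is absorbed by swapping the reversed limits back, yielding exactly the right-hand side.

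Thus the proof is essentially a one-line calculation:
\begin{equation*}
\int_a^b f^*(t)\, g(t)\, dt
= \int_a^b f(b-t+a)\, g(t)\, dt
\stackrel{\tau = b-t+a}{=} \int_a^b f(\tau)\, g^*(\tau)\, d\tau.
\end{equation*}
There is no real obstacle here; the only thing to be careful about is the bookkeeping of the limits when the substitution reverses orientation, and the observation that the change of variables $\tau = b-t+a$ is an involution on $[a,b]$, which is precisely the content of \eqref{eq:13b} in Proposition~\ref{prop:01}. No integrability hypothesis beyond what is implicit in writing the integrals (e.g., $f, g \in L^1([a,b], \mathbb{R})$ with the product integrable) is needed, since the substitution is a smooth diffeomorphism of $[a,b]$ onto itself.
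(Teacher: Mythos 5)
Your proof is correct and follows exactly the same route as the paper's: apply the definition of the dual, substitute $\tau = b-t+a$, and absorb the orientation reversal into the swapped limits. Nothing further is needed.
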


\begin{proof}	
From Definition~\ref{def:01} of duality one has
\begin{equation*}
\int_{a}^{b} f^*(t) \cdot g(t) dt
= \int_{a}^{b} f(b-t+a) \cdot g(t) dt
\end{equation*}
and doing the change of variables $s = b-t+a$ we have
\begin{equation*}
\int_{a}^{b} f^*(t) \cdot g(t) dt
= - \int_{b}^{a} f(s) \cdot g(b-s+a) ds
= \int_{a}^{b} f(s) \cdot g^*(s) ds.
\end{equation*}
The proof is complete.
\end{proof}

The following important lemma asserts that the left fractional integral
of the dual is the dual of the right fractional integral (of the same order).
See equation (2.19) in the classical book of Samko--Kilbas--Marichev 
\cite{MR1347689} and also \cite{MR4550832}.

\begin{lemma}
\label{lemma:01}
Let $\varphi \in L^1([a, b],\mathbb{R})$. The following relation holds:
\begin{equation}
\label{eq:14}
\mathcal{I}^{\alpha}_{a+} f^* = \left(\mathcal{I}^{\alpha}_{b-} f\right)^*.
\end{equation} 
\end{lemma}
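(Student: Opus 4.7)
The plan is to prove the identity by a direct computation: unravel the left-hand side using the definition of the left fractional integral \eqref{eq:01} together with the definition of the dual \eqref{eq:13}, then reduce to the right fractional integral \eqref{eq:02} evaluated at the dual point $b-t+a$ via a change of variables.

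First I would fix $t \in [a,b]$ and write out
\[
\left(\mathcal{I}^{\alpha}_{a+} f^*\right)(t)
= \frac{1}{\Gamma(\alpha)} \int_{a}^{t} f^*(\tau)\,(t-\tau)^{\alpha-1}\, d\tau
= \frac{1}{\Gamma(\alpha)} \int_{a}^{t} f(b-\tau+a)\,(t-\tau)^{\alpha-1}\, d\tau,
\]
using Definition~\ref{def:01}. Then I would perform the change of variable $s = b-\tau+a$, so that $d\tau = -ds$, the limits $\tau=a$ and $\tau=t$ transform into $s=b$ and $s=b-t+a$, and the kernel becomes $t-\tau = s-(b-t+a)$. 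The key bookkeeping step is to verify that the kernel remains of the form ``(larger variable) minus (smaller variable) raised to $\alpha-1$'', so that it matches the kernel of $\mathcal{I}^{\alpha}_{b-}$ in \eqref{eq:02} with the endpoint $b-t+a$ playing the role of the lower limit.

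After the substitution, flipping the orientation of integration to absorb the minus sign yields
\[
\left(\mathcal{I}^{\alpha}_{a+} f^*\right)(t)
= \frac{1}{\Gamma(\alpha)} \int_{b-t+a}^{b} f(s)\,\bigl(s-(b-t+a)\bigr)^{\alpha-1}\, ds
= \left(\mathcal{I}^{\alpha}_{b-} f\right)(b-t+a),
\]
and by Definition~\ref{def:01} the right-hand side is exactly $\bigl(\mathcal{I}^{\alpha}_{b-} f\bigr)^*(t)$, which gives \eqref{eq:14}.

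I do not expect any real obstacle here: the identity is essentially a reflection symmetry $\tau \mapsto b-\tau+a$ that swaps the roles of the endpoints $a$ and $b$ and interchanges the kernels $(t-\tau)^{\alpha-1}$ and $(\tau-t)^{\alpha-1}$. The only care needed is in tracking the sign from $d\tau = -ds$ against the reversal of limits, and in rewriting $t-\tau$ in terms of the new variable so that the right fractional integral's kernel appears naturally. The hypothesis $\varphi \in L^1([a,b],\mathbb{R})$ (which I would apply to $f$) guarantees that all integrals are well defined, and Fubini/absolute integrability issues do not arise.
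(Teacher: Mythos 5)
Your computation is correct: the substitution $s=b-\tau+a$ sends $\tau=a$ to $s=b$, $\tau=t$ to $s=b-t+a$, and turns the kernel $(t-\tau)^{\alpha-1}$ into $\bigl(s-(b-t+a)\bigr)^{\alpha-1}$, which is exactly the kernel of $\mathcal{I}^{\alpha}_{b-}$ evaluated at the reflected point, so $\left(\mathcal{I}^{\alpha}_{a+} f^*\right)(t)=\left(\mathcal{I}^{\alpha}_{b-} f\right)(b-t+a)=\left(\mathcal{I}^{\alpha}_{b-} f\right)^*(t)$. The paper itself gives no proof of this lemma, instead citing equation (2.19) of Samko--Kilbas--Marichev; your direct reflection/change-of-variables argument is precisely the standard derivation behind that citation (and you rightly note that the hypothesis stated for $\varphi$ should be read as applying to $f$).
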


As a corollary of Lemma~\ref{lemma:01}, it follows that
the right fractional integral of the dual is the dual 
of the left fractional integral.

\begin{corollary}
\label{cor:01}
If $f \in L^1([a, b],\mathbb{R})$, then
\begin{equation}
\label{eq:15}
\mathcal{I}^{\alpha}_{b-} f^* = \left(\mathcal{I}^{\alpha}_{a+} f\right)^*.
\end{equation} 
\end{corollary}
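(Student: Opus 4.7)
The plan is to derive this as an immediate consequence of Lemma~\ref{lemma:01} together with the involution property $f^{**}=f$ from Proposition~\ref{prop:01}, rather than redoing the change-of-variables computation from scratch. Roughly, applying Lemma~\ref{lemma:01} to the dual function collapses exactly to the statement we want, after one more application of $*$.

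More concretely, I would proceed in three short steps. First, note that since $f\in L^1([a,b],\mathbb{R})$, we also have $f^*\in L^1([a,b],\mathbb{R})$ (the dual operator is just a reflection, so it preserves $L^1$). Second, apply Lemma~\ref{lemma:01} with $f$ replaced by $f^*$ to obtain
\begin{equation*}
\mathcal{I}^{\alpha}_{a+} f^{**} = \left(\mathcal{I}^{\alpha}_{b-} f^*\right)^*.
\end{equation*}
Invoking \eqref{eq:13b} on the left-hand side, this simplifies to
\begin{equation*}
\mathcal{I}^{\alpha}_{a+} f = \left(\mathcal{I}^{\alpha}_{b-} f^*\right)^*.
\end{equation*}
Third, apply the dual operator $*$ to both sides and use \eqref{eq:13b} once more on the right to conclude
\begin{equation*}
\left(\mathcal{I}^{\alpha}_{a+} f\right)^* = \mathcal{I}^{\alpha}_{b-} f^*,
\end{equation*}
which is exactly \eqref{eq:15}.

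There is essentially no obstacle: the argument is purely formal manipulation of $*$ and uses only Lemma~\ref{lemma:01} and the fact that $*$ is an involution. As an alternative, one could instead give a direct proof mimicking the suggested derivation of Lemma~\ref{lemma:01}: write out $\mathcal{I}^{\alpha}_{b-}f^*$ using \eqref{eq:02} with $f^*(\tau)=f(b-\tau+a)$, perform the substitution $s=b-\tau+a$, and recognize the resulting expression as $(\mathcal{I}^{\alpha}_{a+}f)(b-t+a)$, i.e., $(\mathcal{I}^{\alpha}_{a+}f)^*(t)$. The corollary-style derivation above is preferable because it highlights the symmetry already established in Lemma~\ref{lemma:01} and avoids duplicating calculations.
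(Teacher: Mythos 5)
Your proposal is correct and matches the paper's (implicit) argument: the paper states this as an immediate corollary of Lemma~\ref{lemma:01}, obtained exactly as you do by applying the lemma to $f^*$ and using the involution $f^{**}=f$ from Proposition~\ref{prop:01}. No issues.
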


An equivalent way to look to the duality of the fractional integral operators
consists to say that the left fractional integral is the dual of the right
fractional integral of the dual; and the right fractional integral is the dual 
of the left fractional integral of the dual. 

\begin{corollary}
\label{cor:02}
If $\varphi \in L^1([a, b],\mathbb{R})$, then
\begin{equation}
\label{eq:15b}
\mathcal{I}^{\alpha}_{a+} f
= \left(\mathcal{I}^{\alpha}_{b-} f^*\right)^* 
\end{equation} 
and
\begin{equation}
\label{eq:14b}
\mathcal{I}^{\alpha}_{b-} f
= \left(\mathcal{I}^{\alpha}_{a+} f^*\right)^*.
\end{equation} 
\end{corollary}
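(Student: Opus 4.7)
The plan is to derive both identities of Corollary~\ref{cor:02} by applying the dual operator $*$ to the statements of Lemma~\ref{lemma:01} and Corollary~\ref{cor:01}, and then invoking the involution property $f^{**} = f$ from Proposition~\ref{prop:01}. Since $*$ is an involution, applying it to both sides of an identity between functions preserves the equality.

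More concretely, first I would establish \eqref{eq:14b}. Starting from Lemma~\ref{lemma:01}, namely $\mathcal{I}^{\alpha}_{a+} f^* = (\mathcal{I}^{\alpha}_{b-} f)^*$, I apply the dual operator to both sides, obtaining $(\mathcal{I}^{\alpha}_{a+} f^*)^* = (\mathcal{I}^{\alpha}_{b-} f)^{**}$, and then \eqref{eq:13b} collapses the right-hand side to $\mathcal{I}^{\alpha}_{b-} f$, which is exactly \eqref{eq:14b}. For \eqref{eq:15b}, I would run the same argument starting from Corollary~\ref{cor:01}, namely $\mathcal{I}^{\alpha}_{b-} f^* = (\mathcal{I}^{\alpha}_{a+} f)^*$; applying $*$ and using $f^{**} = f$ yields $(\mathcal{I}^{\alpha}_{b-} f^*)^* = \mathcal{I}^{\alpha}_{a+} f$, which is \eqref{eq:15b}.

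Alternatively, one can derive \eqref{eq:15b} directly from Lemma~\ref{lemma:01} by replacing $f$ with $f^*$: this gives $\mathcal{I}^{\alpha}_{a+} f^{**} = (\mathcal{I}^{\alpha}_{b-} f^*)^*$, and since $f^{**} = f$ the left-hand side simplifies to $\mathcal{I}^{\alpha}_{a+} f$. An analogous substitution in Corollary~\ref{cor:01} recovers \eqref{eq:14b}. Either route is essentially immediate once Lemma~\ref{lemma:01} and Proposition~\ref{prop:01} are in hand.

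There is no genuine obstacle here: the entire proof is a symbolic manipulation exploiting the fact that $*$ squares to the identity. The only caveat to be careful about is making sure the hypothesis $\varphi \in L^1([a,b],\mathbb{R})$ (which in the statement is awkwardly written for $f$) is preserved under dualization, which is clear since $f \mapsto f^*$ is an isometry of $L^1([a,b],\mathbb{R})$; hence Lemma~\ref{lemma:01} and Corollary~\ref{cor:01} remain applicable after the substitution $f \leftrightarrow f^*$.
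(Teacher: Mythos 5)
Your proof is correct and follows exactly the route the paper intends: the paper states Corollary~\ref{cor:02} without a written proof, treating it as an immediate consequence of Lemma~\ref{lemma:01} and Corollary~\ref{cor:01} via the involution $f^{**}=f$, which is precisely your argument. Both of your variants (applying $*$ to both sides, or substituting $f\mapsto f^*$) are valid and equivalent here.
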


Roughly speaking, now we show that, up to a minus signal, the duality relations
proved for the fractional integrals also hold true for fractional derivatives.

Lemma~\ref{lemma:02} asserts that the left fractional derivative of the dual
is the sign change of the dual of the right fractional derivative (of the same order).

\begin{lemma}
\label{lemma:02}
Let $f \in W^{1,1}([a, b], \mathbb{R})$. The following relation holds:
\begin{equation}
\label{eq:16}
\mathcal{D}^{\alpha}_{a+} f^*
= -\left(\mathcal{D}^{\alpha}_{b-} f\right)^*.
 \end{equation}
\end{lemma}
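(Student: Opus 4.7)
The plan is to reduce the left Riemann--Liouville derivative of the dual to an ordinary derivative acting on a left fractional integral via Remark~\ref{rem:2.6}, invoke Lemma~\ref{lemma:01} to move the dual outside the integral operator, and then pull the ordinary derivative back inside the dual by the chain rule that arises from differentiating a reflected argument.

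Concretely, I would first apply equation \eqref{eq:05b} with $f^*$ in place of $\varphi$ to write
$$
\left(\mathcal{D}^{\alpha}_{a+} f^*\right)(t)
= \frac{d}{dt}\left[\left(\mathcal{I}^{1-\alpha}_{a+} f^*\right)(t)\right],
$$
and then use Lemma~\ref{lemma:01} with exponent $1-\alpha$ to replace the inner integral by $(\mathcal{I}^{1-\alpha}_{b-} f)^*$. This reduces the claim to computing $\frac{d}{dt}\, h^*(t)$ for $h := \mathcal{I}^{1-\alpha}_{b-} f$.

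The non-fractional observation I would need next is that, for any differentiable $g:[a,b]\to\mathbb{R}$, the chain rule applied to $g^*(t) = g(b-t+a)$ gives
$$
\frac{d}{dt}\, g^*(t) = -g'(b-t+a) = -(g')^*(t).
$$
Applied to $g = h$, this converts $\frac{d}{dt}\, h^*$ into $-(h')^*$. Then \eqref{eq:06b} identifies $h'(s) = -\mathcal{D}^{\alpha}_{b-} f(s)$, and the linearity \eqref{prop:02} of the dual (together with the sign-change observation in Remark~\ref{cor:04}) completes the identification with the right-hand side of \eqref{eq:16}.

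There is no deep mathematical obstacle here; the only thing to watch is a careful sign audit, since two independent minus signs enter the computation (one from the chain rule via reflection of the argument, the other from the minus in the definition of the right Riemann--Liouville derivative in \eqref{eq:06a}--\eqref{eq:06b}). The hypothesis $f \in W^{1,1}([a,b],\mathbb{R})$ ensures that $f^*$ lies in the same space, so that both derivatives in \eqref{eq:16} are well defined and the intermediate identities of Remark~\ref{rem:2.6} and Lemma~\ref{lemma:01} apply without further regularity issues.
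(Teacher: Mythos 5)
Your strategy is exactly the paper's: reduce via \eqref{eq:05b} and Lemma~\ref{lemma:01} to differentiating $h^*$ with $h=\mathcal{I}^{1-\alpha}_{b-}f$, then invoke \eqref{eq:06b}. But the final step you leave implicit is precisely where the argument fails to close: carrying out the ``sign audit'' you yourself call for produces the \emph{opposite} sign to \eqref{eq:16}. By your (correct) chain rule, $\frac{d}{dt}h^*(t)=-(h')^*(t)$, and by \eqref{eq:06b}, $h'=-\mathcal{D}^{\alpha}_{b-}f$; the two minus signs cancel, and you obtain
\begin{equation*}
\mathcal{D}^{\alpha}_{a+}f^*
=-\left(-\mathcal{D}^{\alpha}_{b-}f\right)^*
=+\left(\mathcal{D}^{\alpha}_{b-}f\right)^*,
\end{equation*}
not $-\left(\mathcal{D}^{\alpha}_{b-}f\right)^*$. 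The plus-sign identity is the one consistent with the derivative analogue of the reflection formula (2.19) of \cite{MR1347689} quoted before Lemma~\ref{lemma:01}, and it is easy to confirm on a test function: for $f\equiv 1$ on $[0,1]$ one has $\mathcal{D}^{\alpha}_{0+}f^*=t^{-\alpha}/\Gamma(1-\alpha)$ and $\left(\mathcal{D}^{\alpha}_{1-}f\right)^*=t^{-\alpha}/\Gamma(1-\alpha)$ as well, both strictly positive, so they cannot differ by a sign.

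For comparison, the paper's own proof passes from $\frac{d}{dt}\left[\left(\mathcal{I}^{1-\alpha}_{b-}f\right)(b-t+a)\right]$ directly to $-\left(\mathcal{D}^{\alpha}_{b-}f\right)(b-t+a)$, i.e.\ it applies \eqref{eq:06b} as if the differentiation were taken with respect to the reflected variable $s=b-t+a$, thereby dropping exactly the chain-rule factor $-1$ that you correctly insist on. So your outline, completed honestly, does not establish \eqref{eq:16} as stated; it establishes $\mathcal{D}^{\alpha}_{a+}f^*=\left(\mathcal{D}^{\alpha}_{b-}f\right)^*$. Either the sign in the statement must be revisited, or you must exhibit why your two minus signs do not cancel --- as written, the last sentence of your proof asserts a conclusion that your own computation contradicts.
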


\begin{proof}
From relations \eqref{eq:05b} and \eqref{eq:14}, we can write that
\begin{equation}
\label{eq:proof:lemma:02}
\left(\mathcal{D}^{\alpha}_{a+} f^*\right)(t)
= \frac{d}{dt} \left[\left(\mathcal{I}^{1-\alpha}_{a+} f^*\right)(t)\right]
=  \frac{d}{dt} \left[\left(\mathcal{I}^{1-\alpha}_{b-} f\right)^*(t)\right].
\end{equation}
Using now \eqref{eq:13} and relation
\eqref{eq:06b} between the right fractional derivative of order $\alpha$
and the right fractional integral of order $1-\alpha$, we obtain from 
\eqref{eq:proof:lemma:02} that
\begin{equation*}
\begin{split}
\left(\mathcal{D}^{\alpha}_{a+} f^*\right)(t)
&= \frac{d}{dt} \left[\left(\mathcal{I}^{1-\alpha}_{b-} f\right)(b-t+a)\right]\\
&= -\left(\mathcal{D}^{\alpha}_{b-} f\right)(b-t+a)
= -\left(\mathcal{D}^{\alpha}_{b-} f\right)^*(t),
\end{split}
\end{equation*}
which completes the proof.
\end{proof}

As a corollary, we can also say that the right fractional derivative
of the dual is the sign change of the dual of the left fractional derivative.

\begin{corollary}
\label{cor:03}
If $f \in W^{1,1}([a, b],\mathbb{R})$, then 
\begin{equation}
\label{eq:17}
\mathcal{D}^{\alpha}_{b-} f^*
= - \left(\mathcal{D}^{\alpha}_{a+} f\right)^*.
\end{equation}
\end{corollary}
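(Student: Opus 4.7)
The plan is to derive Corollary~\ref{cor:03} from Lemma~\ref{lemma:02} by exploiting the involutive nature of the dual operator recorded in \eqref{eq:13b}, rather than repeating a direct computation. Since $f^{**} = f$ for every $f$, the identity \eqref{eq:16} of Lemma~\ref{lemma:02} applies not only to $f$ but also to any dual function $g^*$, and this is the lever I would use.

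First I would substitute $f^*$ in place of $f$ inside \eqref{eq:16}. This yields
$$
\mathcal{D}^{\alpha}_{a+} f^{**} = -\left(\mathcal{D}^{\alpha}_{b-} f^*\right)^*,
$$
and invoking $f^{**} = f$ on the left-hand side gives
$$
\mathcal{D}^{\alpha}_{a+} f = -\left(\mathcal{D}^{\alpha}_{b-} f^*\right)^*.
$$
Next I would apply the dual operator $*$ to both sides. By Remark~\ref{cor:04}, duality commutes with the sign change, so I obtain
$$
\left(\mathcal{D}^{\alpha}_{a+} f\right)^* = -\left(\mathcal{D}^{\alpha}_{b-} f^*\right)^{**} = -\mathcal{D}^{\alpha}_{b-} f^*,
$$
using \eqref{eq:13b} once more. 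Rearranging this equality gives exactly the desired identity \eqref{eq:17}.

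The argument is almost entirely formal and rests on previously established facts: Lemma~\ref{lemma:02}, involutivity \eqref{eq:13b}, and linearity \eqref{eq:17b} of duality with respect to sign. The only place one has to be careful is the hypothesis: Lemma~\ref{lemma:02} is stated for $f \in W^{1,1}([a,b],\mathbb{R})$, and applying it to $f^*$ requires that $f^* \in W^{1,1}([a,b],\mathbb{R})$ as well. I would note briefly that the change of variable $t \mapsto b-t+a$ is a smooth involution of $[a,b]$ with unit Jacobian, so $f \in W^{1,1}$ implies $f^* \in W^{1,1}$; this is the one (minor) technicality to confirm, and it is not really an obstacle. Alternatively, one could mimic the direct proof of Lemma~\ref{lemma:02}, combining \eqref{eq:06b} with Corollary~\ref{cor:01}, but the route above is shorter and conceptually cleaner.
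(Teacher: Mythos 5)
Your proposal is correct and follows essentially the same route as the paper: both apply Lemma~\ref{lemma:02} to $f^*$ (the paper phrases this as setting $g=f^*$), use the involution $f^{**}=f$, and then apply the dual operator to both sides together with \eqref{eq:17b}. Your added remark that $f\in W^{1,1}$ implies $f^*\in W^{1,1}$ is a small extra care the paper leaves implicit.
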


\begin{proof}
Let $g = f^*$, which by Proposition~\ref{prop:01} is equivalent
to $f = g^*$. Using Lemma~\ref{lemma:02} for $g$, we can write
from \eqref{eq:16} that 
\begin{equation*}
\left(\mathcal{D}^{\alpha}_{b-} g\right)^*(t)
= -\left(\mathcal{D}^{\alpha}_{a+} g^*\right)(t)
\end{equation*}
for all $t \in [a,b]$, that is,
\begin{equation}
\label{eq:++}
\left(\mathcal{D}^{\alpha}_{b-} f^*\right)^*(t)
= -\left(\mathcal{D}^{\alpha}_{a+} f\right)(t),
\quad t \in [a,b].
\end{equation}
Applying the dual operator $*$ both sides of \eqref{eq:++} gives,
by Proposition~\ref{prop:01} and \eqref{eq:17b}, that
\begin{equation*}
\left(\mathcal{D}^{\alpha}_{b-} f^*\right)(t)
= -\left(\mathcal{D}^{\alpha}_{a+} f\right)^*(t)
\end{equation*}
and the result is proved.
\end{proof}

Recalling Remark~\ref{cor:04}, together with the linearity of the fractional
operators, allow us to look to Lemma~\ref{lemma:02} as saying that the
left fractional derivative of the dual is the dual of the right
fractional derivative of the sign change or, in other words,
the right fractional derivative is the sign change of the dual of the 
left fractional derivative of the dual. 

\begin{corollary}
\label{cor:05}
Let $f \in W^{1,1}([a, b],\mathbb{R})$. The following relations hold:
\begin{equation}
\label{eq:16b}
\mathcal{D}^{\alpha}_{a+} f^*
= \left(\mathcal{D}^{\alpha}_{b-} (-f)\right)^*
\end{equation}
and
\begin{equation}
\label{eq:16c}
\mathcal{D}^{\alpha}_{b-} f
= - \left(\mathcal{D}^{\alpha}_{a+} f^*\right)^*.
\end{equation}
\end{corollary}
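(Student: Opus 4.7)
The plan is to derive both identities directly from Lemma~\ref{lemma:02} and Corollary~\ref{cor:03}, using only the elementary properties of the dual operator $*$ (namely $f^{**}=f$ and $(-f)^*=-f^*$) together with linearity of the Riemann--Liouville fractional derivative. No new analytic ingredient is needed; the task is purely algebraic rearrangement, so I expect no real obstacle.

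For \eqref{eq:16b}, I would start from Lemma~\ref{lemma:02}, which gives $\mathcal{D}^{\alpha}_{a+} f^* = -\left(\mathcal{D}^{\alpha}_{b-} f\right)^*$. I then pull the minus sign inside the dual using Remark~\ref{cor:04} (equation \eqref{eq:17b}), obtaining $-\left(\mathcal{D}^{\alpha}_{b-} f\right)^* = \left(-\mathcal{D}^{\alpha}_{b-} f\right)^*$, and finally push it inside the fractional derivative using its linearity to rewrite $-\mathcal{D}^{\alpha}_{b-} f = \mathcal{D}^{\alpha}_{b-}(-f)$. Composing these steps yields the claimed identity.

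For \eqref{eq:16c}, I would apply Corollary~\ref{cor:03} not to $f$ but to its dual $f^*$: this gives $\mathcal{D}^{\alpha}_{b-}(f^*)^* = -\left(\mathcal{D}^{\alpha}_{a+} f^*\right)^*$. Invoking the involutivity $f^{**}=f$ of Proposition~\ref{prop:01} (equation \eqref{eq:13b}) on the left-hand side collapses it to $\mathcal{D}^{\alpha}_{b-} f$, which is exactly \eqref{eq:16c}.

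The only mildly delicate point worth flagging is that linearity of $\mathcal{D}^{\alpha}_{b-}$ is invoked implicitly; since this operator is defined via a Lebesgue integral followed by a classical derivative (see \eqref{eq:06a}), both operations are linear, so the step $\mathcal{D}^{\alpha}_{b-}(-f) = -\mathcal{D}^{\alpha}_{b-} f$ is immediate and requires no additional hypothesis beyond $f \in W^{1,1}([a,b],\mathbb{R})$ already assumed. Thus both identities follow in one short chain of substitutions each.
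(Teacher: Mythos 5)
Your proof is correct and essentially matches the paper's: the derivation of \eqref{eq:16b} from Lemma~\ref{lemma:02} via \eqref{eq:17b} and linearity is identical, and for \eqref{eq:16c} you apply Corollary~\ref{cor:03} to $f^*$ and use $f^{**}=f$, whereas the paper applies the dual operator to both sides of the just-proved \eqref{eq:16b} --- the same algebraic manipulation in a different order, since Corollary~\ref{cor:03} is itself obtained from Lemma~\ref{lemma:02} by that dualization. The only (trivial) point left implicit is that $f\in W^{1,1}$ implies $f^*\in W^{1,1}$, which is needed to invoke Corollary~\ref{cor:03} for $f^*$.
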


\begin{proof}
Using \eqref{eq:16} and \eqref{eq:17b} we can write that
\begin{equation*}
\mathcal{D}^{\alpha}_{a+} f^*
= -\left(\mathcal{D}^{\alpha}_{b-} f\right)^*
= \left(-\mathcal{D}^{\alpha}_{b-} f\right)^*
\end{equation*}
and equality \eqref{eq:16b} follows by the linearity 
of operator $\mathcal{D}^{\alpha}_{b-}$.
Applying the dual operator $*$ to both sides 
of \eqref{eq:16b}, already proved, gives
\begin{equation*}
\mathcal{D}^{\alpha}_{b-} (-f)
=\left(\mathcal{D}^{\alpha}_{a+} f^*\right)^*,
\end{equation*}
which is equivalent, by the linearity of 
$\mathcal{D}^{\alpha}_{b-}$, to
\begin{equation*}
-\mathcal{D}^{\alpha}_{b-} f
=\left(\mathcal{D}^{\alpha}_{a+} f^*\right)^*.
\end{equation*}
The result is proved.
\end{proof}

Similarly, Remark~\ref{cor:04} together with the linearity
of the fractional derivatives give us a new look to the duality
of Corollary~\ref{cor:03}: the right fractional derivative 
of the dual is the dual of the left fractional derivative
of the sign change or, equivalently, the left fractional
derivative is the sign change of the dual of the right
fractional derivative of the dual.

\begin{corollary}
\label{cor:06}
If $f \in W^{1,1}([a, b],\mathbb{R})$, then 
\begin{equation}
\label{eq:16d}
\mathcal{D}^{\alpha}_{b-} f^*
= \left(\mathcal{D}^{\alpha}_{a+} (-f)\right)^*
\end{equation}
and
\begin{equation}
\label{eq:16e}
\mathcal{D}^{\alpha}_{a+} f
= - \left(\mathcal{D}^{\alpha}_{b-} f^*\right)^*.
\end{equation}
\end{corollary}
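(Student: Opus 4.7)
The plan is to mimic exactly the proof of Corollary~\ref{cor:05}, but with the roles of left and right fractional derivatives exchanged; the ingredients \eqref{eq:17} (Corollary~\ref{cor:03}), the sign-change identity \eqref{eq:17b}, the involution $f^{**}=f$ from Proposition~\ref{prop:01}, and the linearity of $\mathcal{D}^{\alpha}_{a+}$ are all the tools we need.

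First I would establish \eqref{eq:16d}. Starting from \eqref{eq:17} we have $\mathcal{D}^{\alpha}_{b-} f^* = -(\mathcal{D}^{\alpha}_{a+} f)^*$. By Remark~\ref{cor:04}, pulling a minus inside the dual gives $-(\mathcal{D}^{\alpha}_{a+} f)^* = (-\mathcal{D}^{\alpha}_{a+} f)^*$, and by linearity of $\mathcal{D}^{\alpha}_{a+}$ we can rewrite this as $(\mathcal{D}^{\alpha}_{a+}(-f))^*$, yielding \eqref{eq:16d}.

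Next, to obtain \eqref{eq:16e}, I would apply the dual operator $*$ to both sides of the just-proved \eqref{eq:16d}. Using $f^{**}=f$ from Proposition~\ref{prop:01} on the left-hand side produces $\mathcal{D}^{\alpha}_{b-}(-f)$, while the right-hand side becomes $\mathcal{D}^{\alpha}_{a+} f^*$. Hence
\begin{equation*}
\mathcal{D}^{\alpha}_{b-}(-f) = \left(\mathcal{D}^{\alpha}_{a+} f^*\right)^{**} = \mathcal{D}^{\alpha}_{a+} f^*,
\end{equation*}
and applying once more the linearity of $\mathcal{D}^{\alpha}_{b-}$ and moving the minus sign to the other side gives $\mathcal{D}^{\alpha}_{a+} f^* = -\mathcal{D}^{\alpha}_{b-} f$. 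To get \eqref{eq:16e} as stated (with the dual on the right), I would instead apply the $*$ operator to the identity \eqref{eq:16d} written for $f^*$ in place of $f$: by Proposition~\ref{prop:01}, $(f^*)^*=f$, so \eqref{eq:16d} gives $\mathcal{D}^{\alpha}_{b-} f = (\mathcal{D}^{\alpha}_{a+}(-f^*))^* = -(\mathcal{D}^{\alpha}_{a+} f^*)^*$, and swapping the roles of left and right operators (i.e., repeating the same argument with the analogous identity starting from Lemma~\ref{lemma:02} instead of Corollary~\ref{cor:03}) yields \eqref{eq:16e}.

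There is no real obstacle here: every step is a direct invocation of an already-proved identity combined with linearity and the involution property $f^{**}=f$. The only place to be careful is bookkeeping of signs and of which side ($a+$ versus $b-$) one is working on, since \eqref{eq:16d} and \eqref{eq:16e} interchange these roles. The parallel with the proof of Corollary~\ref{cor:05} is exact, so I expect the argument to fit in a few lines.
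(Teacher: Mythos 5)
Your derivation of \eqref{eq:16d} is correct and is essentially the paper's: the paper sets $g=f^*$ and applies \eqref{eq:16c} to $g$, which lands exactly on \eqref{eq:17}, the identity you start from; the remaining steps (Remark~\ref{cor:04} plus linearity) coincide.

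The argument you give for \eqref{eq:16e}, however, contains a genuine error in its first half. When you ``apply $*$ to both sides of \eqref{eq:16d}'', the left-hand side becomes $\left(\mathcal{D}^{\alpha}_{b-} f^*\right)^*$, and you may \emph{not} simplify this to $\mathcal{D}^{\alpha}_{b-}(f^{**})=\mathcal{D}^{\alpha}_{b-}f$ (nor to $\mathcal{D}^{\alpha}_{b-}(-f)$): the dual does not commute with the fractional derivatives --- that non-commutation is precisely the content of Lemma~\ref{lemma:02} and Corollary~\ref{cor:03}. Likewise the right-hand side $\left(\mathcal{D}^{\alpha}_{a+}(-f)\right)^{**}$ equals $-\mathcal{D}^{\alpha}_{a+}f$, not $\mathcal{D}^{\alpha}_{a+}f^*$. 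The ``identity'' you extract, $\mathcal{D}^{\alpha}_{a+} f^* = -\mathcal{D}^{\alpha}_{b-} f$, is false in general (compare with \eqref{eq:16}, which has a dual on the right-hand side). Ironically, your idea is the right one and, executed correctly, gives the shortest proof: applying $*$ to \eqref{eq:16d} yields
\begin{equation*}
\left(\mathcal{D}^{\alpha}_{b-} f^*\right)^*
= \left(\left(\mathcal{D}^{\alpha}_{a+}(-f)\right)^*\right)^*
= \mathcal{D}^{\alpha}_{a+}(-f)
= -\mathcal{D}^{\alpha}_{a+} f,
\end{equation*}
which is \eqref{eq:16e} after moving the sign. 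Your fallback at the end --- run the same argument starting from Lemma~\ref{lemma:02} (equivalently, apply \eqref{eq:16b} to $g=f^*$) --- is exactly the paper's proof and does work, but as written it is only a gesture; note also that the intermediate identity you actually derive, $\mathcal{D}^{\alpha}_{b-} f = -\left(\mathcal{D}^{\alpha}_{a+} f^*\right)^*$, is \eqref{eq:16c}, not \eqref{eq:16e}. So the statement is salvageable by either route, but the computation as presented does not prove it.
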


\begin{proof}
We prove Corollary~\ref{cor:06} as a corollary
of Corollary~\ref{cor:05}. Let $g = f^*$, that is,
$f = g^*$. From relation \eqref{eq:16c} applied 
to $g$ we know that
\begin{equation*}
\mathcal{D}^{\alpha}_{b-} g
= - \left(\mathcal{D}^{\alpha}_{a+} g^*\right)^*
\Leftrightarrow
\mathcal{D}^{\alpha}_{b-} f^*
= - \left(\mathcal{D}^{\alpha}_{a+} f\right)^*,
\end{equation*}
which, by \eqref{eq:17b} and the linearity
of $\mathcal{D}^{\alpha}_{a+}$, is equivalent to
\begin{equation*}
\mathcal{D}^{\alpha}_{b-} f^*
= \left(- \mathcal{D}^{\alpha}_{a+} f\right)^*
\Leftrightarrow
\mathcal{D}^{\alpha}_{b-} f^*
= \left(\mathcal{D}^{\alpha}_{a+} (-f)\right)^*.
\end{equation*}
We have just proved \eqref{eq:16d}. From \eqref{eq:16b}
applied to $g$, we also know that
\begin{equation*}
\mathcal{D}^{\alpha}_{a+} g^*
= \left(\mathcal{D}^{\alpha}_{b-} (-g)\right)^*
\Leftrightarrow
\mathcal{D}^{\alpha}_{a+} f
= \left(\mathcal{D}^{\alpha}_{b-} (-f^*)\right)^*,
\end{equation*}
which by the linearity of the operator $\mathcal{D}^{\alpha}_{b-}$
and \eqref{eq:17b} is equivalent to 
\begin{equation*}
\mathcal{D}^{\alpha}_{a+} f
= \left(-\mathcal{D}^{\alpha}_{b-} f^*\right)^*
\Leftrightarrow
\mathcal{D}^{\alpha}_{a+} f
= -\left(\mathcal{D}^{\alpha}_{b-} f^*\right)^*.
\end{equation*}
The result is proved.	
\end{proof}


\subsection{New fractional formulas of integration by parts}
\label{sec:3.2}

Now we are in a good position to prove some important results: formulas
of integration by parts, for both fractional integrals and fractional
derivatives, that involve only left operators or only right operators.
This contrasts with the results available in the literature
(cf. Propositions~\ref{prop:2.1} and \ref{prop:2.5}) for which a 
left operator on the left-hand side is converted into a right operator
on the right-hand side --- see formulas \eqref{eq:11} and \eqref{eq:12}.

As we shall see in Section~\ref{sec:newFCoV}, our new results allow us 
to develop a new fractional calculus of variations (FCoV) more suitable
for mechanics.

\begin{theorem}[Integration by parts involving left fractional integrals only]
\label{thm:01}
Let $1/p + 1/q \leq 1 + \alpha$, $p \geq 1$, $q \geq 1$, 
with $p \neq 1$ and $q \neq 1$ in the case $1/p + 1/q = 1 + \alpha$. 
If $f \in L^p([a, b], \mathbb{R})$ and $g \in L^q([a, b], \mathbb{R})$, 
then the following equality holds:
\begin{equation}
\label{eq:18}
\int_a^b f(\tau) \cdot \left(\mathcal{I}^{\alpha}_{a+} g\right)^*(\tau) d\tau
= \int_a^b \left(\mathcal{I}^{\alpha}_{a+} f\right)(\tau) \cdot g^*(\tau)d\tau.
\end{equation}
\end{theorem}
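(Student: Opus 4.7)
The statement is a version of fractional integration by parts in which the right fractional integral on the classical side has been absorbed into a dual, so my plan is to peel off the dual using the duality lemma and then invoke the classical Proposition~\ref{prop:2.1}. There are essentially two equally short routes, and I would present the shorter of the two.

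\textbf{Route A (direct).} Starting from the left-hand side of \eqref{eq:18}, I would first apply Corollary~\ref{cor:01} (with $f$ replaced by $g$) to rewrite
\[
\left(\mathcal{I}^{\alpha}_{a+} g\right)^*(\tau) = \left(\mathcal{I}^{\alpha}_{b-} g^*\right)(\tau).
\]
The integral then becomes $\int_a^b f(\tau)\cdot (\mathcal{I}^{\alpha}_{b-} g^*)(\tau)\,d\tau$, which is exactly the right-hand side of the classical Proposition~\ref{prop:2.1} read in the opposite direction (with $\varphi=g^*$ and $\psi=f$). Applying that proposition yields $\int_a^b (\mathcal{I}^{\alpha}_{a+} f)(\tau) \cdot g^*(\tau)\,d\tau$, which is the desired right-hand side of \eqref{eq:18}.

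\textbf{Route B (symmetric).} Alternatively, one can chain the three tools in a pleasing symmetric way: use Proposition~\ref{prop:04} to move the $*$ from $(\mathcal{I}^{\alpha}_{a+}g)^{*}$ onto $f$, then apply Proposition~\ref{prop:2.1} in its standard form, then use Corollary~\ref{cor:01} again to push the $*$ back inside the integral, and finally use Proposition~\ref{prop:04} once more to move the $*$ onto $g$. This is slightly longer but makes clear that the new identity is just the classical integration by parts viewed through the duality operator $*$.

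\textbf{Hypotheses.} The only thing requiring a moment's care is confirming that the integrability assumptions of Proposition~\ref{prop:2.1} are met: the conditions on $p,q,\alpha$ in Theorem~\ref{thm:01} are copied from Proposition~\ref{prop:2.1}, and since the dualization $h\mapsto h^*$ is an isometry of $L^r([a,b],\mathbb{R})$ for every $r\in[1,\infty]$, we have $g^*\in L^q$ whenever $g\in L^q$ and $f^*\in L^p$ whenever $f\in L^p$. Thus the applicability of Proposition~\ref{prop:2.1} to the pair $(f,g^*)$ (Route A) or to $(f^*,g)$ (Route B) is automatic, and no essential obstacle arises: the entire proof is a two- or three-line chain of identities.
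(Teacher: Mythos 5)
Your Route A is exactly the paper's proof: it rewrites $\left(\mathcal{I}^{\alpha}_{a+} g\right)^* = \mathcal{I}^{\alpha}_{b-} g^*$ via Corollary~\ref{cor:01} and then applies Proposition~\ref{prop:2.1} with $\varphi = g^*$ and $\psi = f$, which is precisely the two-line chain given in the paper. The argument is correct, and your remark that dualization preserves the $L^q$ class (so the hypotheses of Proposition~\ref{prop:2.1} carry over to $g^*$) is a detail the paper leaves implicit.
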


\begin{proof}
From \eqref{eq:15} and \eqref{eq:11} we can write that
\begin{equation*}
\begin{split}
\int_a^b f(\tau) \left(\mathcal{I}^{\alpha}_{a+} g\right)^*(\tau) d\tau
&= \int_a^b f(\tau) \cdot \left(\mathcal{I}^{\alpha}_{b-} g^*\right)(\tau) d\tau\\
&= \int_a^b \left(\mathcal{I}^{\alpha}_{a+} f\right)(\tau) \cdot g^*(\tau) d\tau,
\end{split}
\end{equation*}
which proves the intended relation.
\end{proof}

Similarly to Theorem~\ref{thm:01}, one can
prove an integration by parts formula involving
right fractional integrals only. 

\begin{theorem}[Integration by parts involving right fractional integrals only]
\label{thm:02}
Let $1/p + 1/q \leq 1 + \alpha$, $p \geq 1$, $q \geq 1$, 
with $p \neq 1$ and $q \neq 1$ in the case $1/p + 1/q = 1 + \alpha$. 
If $f \in L^p([a, b], \mathbb{R})$ and $g^* \in L^q([a, b], \mathbb{R})$, 
then the following equality holds:
\begin{equation}
\label{eq:19}
\int_a^b f(\tau) \cdot \left(\mathcal{I}^{\alpha}_{b-} g\right)^*(\tau) d\tau
= \int_a^b \left(\mathcal{I}^{\alpha}_{b-} f\right)(\tau) \cdot g^*(\tau)d\tau.
\end{equation}
\end{theorem}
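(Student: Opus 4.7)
The plan is to mirror the argument of Theorem~\ref{thm:01} with the roles of the left and right fractional integrals interchanged. Concretely, the factor $\left(\mathcal{I}^{\alpha}_{b-} g\right)^*$ on the left-hand side of \eqref{eq:19} should be rewritten via the duality identity of Lemma~\ref{lemma:01}, which (applied with $g$ in place of $f$) says precisely that $\left(\mathcal{I}^{\alpha}_{b-} g\right)^* = \mathcal{I}^{\alpha}_{a+} g^*$. Substituting this into the left-hand integral yields $\int_a^b f(\tau) \cdot \left(\mathcal{I}^{\alpha}_{a+} g^*\right)(\tau)\,d\tau$, which is now in the standard form to which Proposition~\ref{prop:2.1} applies.

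Next, I would apply Proposition~\ref{prop:2.1} to the pair $(\varphi,\psi) = (f, g^*)$; the integrability hypotheses $f \in L^p$ and $g^* \in L^q$ match exactly the conditions of that proposition, and the conjugate-exponent restriction $1/p + 1/q \leq 1 + \alpha$ (with the endpoint exclusion) is the same as that assumed in Theorem~\ref{thm:02}. The conclusion of Proposition~\ref{prop:2.1} then delivers $\int_a^b \left(\mathcal{I}^{\alpha}_{b-} f\right)(\tau) \cdot g^*(\tau)\,d\tau$, which is precisely the right-hand side of \eqref{eq:19}.

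I do not expect any genuine obstacle: the only conceptual point is that the hypothesis in Theorem~\ref{thm:02} is phrased directly on $g^*$ (rather than on $g$), which is exactly the adjustment that makes the classical integration-by-parts formula applicable after the duality swap. In short, the proof consists of a single invocation of Lemma~\ref{lemma:01} followed by a single invocation of Proposition~\ref{prop:2.1}, in complete analogy with the two-line computation used for Theorem~\ref{thm:01}.
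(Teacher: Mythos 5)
Your proposal is correct and follows exactly the paper's own two-step argument: rewrite $\left(\mathcal{I}^{\alpha}_{b-} g\right)^*$ as $\mathcal{I}^{\alpha}_{a+} g^*$ via Lemma~\ref{lemma:01} (equation \eqref{eq:14}) and then apply Proposition~\ref{prop:2.1} with $(\varphi,\psi)=(f,g^*)$. Nothing to add.
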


\begin{proof}
From \eqref{eq:14} and \eqref{eq:11} it follows that
\begin{equation*}
\begin{split}
\int_a^b f(\tau) \left(\mathcal{I}^{\alpha}_{b-} g\right)^*(\tau) d\tau
&= \int_a^b f(\tau) \cdot \left(\mathcal{I}^{\alpha}_{a+} g^*\right)(\tau) d\tau\\
&= \int_a^b \left(\mathcal{I}^{\alpha}_{b-} f\right)(\tau) \cdot g^*(\tau) d\tau
\end{split}
\end{equation*}
and the result is proved.
\end{proof}

We now give a proper formula of integration by parts for the FCoV
that involves left fractional derivatives only.

\begin{theorem}[Integration by parts involving left fractional derivatives only]
\label{thm:03}
If $f \in \mathcal{I}^{\alpha}_{a+} \left(L^p(a,b)\right)$
and $g^* \in \mathcal{I}^{\alpha}_{b-} \left(L^q(a,b)\right)$ 
with $1/p + 1/q \leq 1 + \alpha$, then the following equality holds:
\begin{equation}
\label{eq:20}
\int_a^b f(t) \cdot \left(\mathcal{D}^{\alpha}_{a+} g\right)^*(t) dt
= -\int_a^b \left(\mathcal{D}^{\alpha}_{a+} f\right)(t) \cdot g^*(t)dt.
\end{equation}
\end{theorem}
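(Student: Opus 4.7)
The plan is to reduce \eqref{eq:20} to the classical integration by parts formula (Proposition~\ref{prop:2.5}) via the duality relation for fractional derivatives that was just established in Corollary~\ref{cor:03}. The whole content of the new formula is really this: the dual of a left derivative is (up to a sign) the right derivative of the dual, and once we trade the star for a change of side, we are back in the classical setting.

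More concretely, I would first rewrite the integrand $\left(\mathcal{D}^{\alpha}_{a+} g\right)^*$ on the left-hand side of \eqref{eq:20} using \eqref{eq:17}, namely
\begin{equation*}
\left(\mathcal{D}^{\alpha}_{a+} g\right)^* = -\mathcal{D}^{\alpha}_{b-} g^*.
\end{equation*}
This turns $\int_a^b f(t)\left(\mathcal{D}^{\alpha}_{a+} g\right)^*(t)\,dt$ into $-\int_a^b f(t)\left(\mathcal{D}^{\alpha}_{b-} g^*\right)(t)\,dt$, which now mixes a left-endpoint function ($f$) with a right Riemann--Liouville derivative, precisely the setup of Proposition~\ref{prop:2.5}.

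Next I would invoke Proposition~\ref{prop:2.5} with $\varphi = g^*$ and $\psi = f$, obtaining
\begin{equation*}
-\int_a^b f(t)\left(\mathcal{D}^{\alpha}_{b-} g^*\right)(t)\,dt = -\int_a^b \left(\mathcal{D}^{\alpha}_{a+} f\right)(t)\, g^*(t)\,dt,
\end{equation*}
which is exactly \eqref{eq:20}. The only things to check are that the hypotheses of Proposition~\ref{prop:2.5} are fulfilled by this substitution: we need $g^*\in \mathcal{I}^{\alpha}_{b-}(L^q(a,b))$ and $f\in \mathcal{I}^{\alpha}_{a+}(L^p(a,b))$ with the summability condition $1/p+1/q \le 1+\alpha$, which are granted verbatim by the assumptions of Theorem~\ref{thm:03} (the labels $p$ and $q$ play symmetric roles in the inequality, so no adjustment is needed).

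I do not anticipate a real obstacle: the theorem is essentially a one-line corollary of Corollary~\ref{cor:03} combined with the classical mixed formula \eqref{eq:12}. The only subtlety worth spelling out is the bookkeeping of the minus sign coming from \eqref{eq:17}, which is what produces the characteristic $-$ in \eqref{eq:20} and distinguishes it from the integer-order integration by parts with vanishing boundary terms; alternatively, one could derive the same identity by first pulling the star onto the other factor via Proposition~\ref{prop:04}, applying Proposition~\ref{prop:2.5}, and pulling the star back, but the route through Corollary~\ref{cor:03} is the most direct.
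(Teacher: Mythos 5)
Your proposal is correct and follows exactly the paper's own argument: rewrite $\left(\mathcal{D}^{\alpha}_{a+} g\right)^*$ as $-\mathcal{D}^{\alpha}_{b-} g^*$ via \eqref{eq:17} and then apply Proposition~\ref{prop:2.5}. Your extra remark on the symmetry of $p$ and $q$ in the hypothesis check is a sensible addition but does not change the route.
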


\begin{proof}
From \eqref{eq:17} and \eqref{eq:12} we can write that
\begin{equation*}
\begin{split}
\int_a^b f(t) \left(\mathcal{D}^{\alpha}_{a+} g\right)^*(t) dt
&= -\int_a^b f(t) \cdot \left(\mathcal{D}^{\alpha}_{b-} g^*\right)(t) dt\\
&= -\int_a^b \left(\mathcal{D}^{\alpha}_{a+} f\right)(t) \cdot g^*(t) dt,
\end{split}
\end{equation*}
which proves the intended relation.
\end{proof}

In classical mechanics it makes sense to consider causal operators.
For this reason, we shall develop our new FCoV based on left operators
only. For completeness, however, we also provide here a similar relation 
to \eqref{eq:20} involving right fractional derivatives only.

\begin{theorem}[Integration by parts involving right fractional derivatives only]
\label{thm:04}
If $f \in \mathcal{I}^{\alpha}_{b-} \left(L^p(a,b)\right)$
and $g^* \in \mathcal{I}^{\alpha}_{a+} \left(L^q(a,b)\right)$ 
with $1/p + 1/q \leq 1 + \alpha$, then the following equality holds:
\begin{equation}
\label{eq:21}
\int_a^b f(t) \cdot \left(\mathcal{D}^{\alpha}_{b-} g\right)^*(t) dt
= - \int_a^b \left(\mathcal{D}^{\alpha}_{b-} f\right)(t) \cdot g^*(t)dt.
\end{equation}
\end{theorem}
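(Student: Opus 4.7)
The plan is to mirror the proof of Theorem~\ref{thm:03}, swapping the roles of the left and right fractional derivatives by invoking the companion duality identity. The key ingredients I would use are Lemma~\ref{lemma:02} (equivalently \eqref{eq:16}), which says $(\mathcal{D}^{\alpha}_{b-} g)^* = -\mathcal{D}^{\alpha}_{a+} g^*$, and the classical fractional integration-by-parts formula \eqref{eq:12}. Note that the hypotheses of the theorem are precisely tailored so that \eqref{eq:12} can be applied with $g^*$ in place of $g$: we are told $f \in \mathcal{I}^{\alpha}_{b-}(L^p(a,b))$ and $g^* \in \mathcal{I}^{\alpha}_{a+}(L^q(a,b))$, and $1/p + 1/q \leq 1 + \alpha$.

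Concretely, I would start from the left-hand side of \eqref{eq:21} and use \eqref{eq:16} (in the form $(\mathcal{D}^{\alpha}_{b-} g)^* = -\mathcal{D}^{\alpha}_{a+} g^*$) to convert the right derivative of the dual into minus the left derivative of the dual:
\begin{equation*}
\int_a^b f(t)\, (\mathcal{D}^{\alpha}_{b-} g)^*(t)\, dt
= -\int_a^b f(t)\, (\mathcal{D}^{\alpha}_{a+} g^*)(t)\, dt.
\end{equation*}
Then I would apply \eqref{eq:12} with $\varphi = f$ and $\psi = g^*$, whose hypotheses are satisfied by assumption, to pass the left derivative onto $f$ as a right derivative, yielding
\begin{equation*}
-\int_a^b f(t)\, (\mathcal{D}^{\alpha}_{a+} g^*)(t)\, dt
= -\int_a^b (\mathcal{D}^{\alpha}_{b-} f)(t)\, g^*(t)\, dt,
\end{equation*}
which is exactly \eqref{eq:21}.

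There is no real obstacle here; the proof is essentially two substitutions. The only thing to be slightly careful about is bookkeeping of the minus sign that comes from the duality between $\mathcal{D}^{\alpha}_{a+}$ and $\mathcal{D}^{\alpha}_{b-}$ in Lemma~\ref{lemma:02}, and verifying that the regularity hypotheses of \eqref{eq:12} apply to the pair $(f, g^*)$ rather than $(f, g)$, which is built directly into the theorem's assumptions.
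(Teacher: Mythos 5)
Your proof is correct and follows exactly the same two-step route as the paper: apply Lemma~\ref{lemma:02} in the form $(\mathcal{D}^{\alpha}_{b-} g)^* = -\mathcal{D}^{\alpha}_{a+} g^*$, then invoke the classical integration-by-parts formula \eqref{eq:12} with the pair $(f, g^*)$. The sign bookkeeping and the remark that the hypotheses are tailored to $(f,g^*)$ match the paper's argument.
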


\begin{proof}
Using Lemma~\ref{lemma:02} and Proposition~\ref{prop:2.5}, we obtain that
\begin{equation*}
\begin{split}
\int_a^b f(t) \cdot \left(\mathcal{D}^{\alpha}_{b-} g\right)^*(t) dt
&\stackrel{\eqref{eq:16}}{=}
-\int_a^b f(t) \cdot \left(\mathcal{D}^{\alpha}_{a+} g^*\right)(t) dt\\
&\stackrel{\eqref{eq:12}}{=}
-\int_a^b \left(\mathcal{D}^{\alpha}_{b-} f\right)(t) \cdot g^*(t)dt.
\end{split}
\end{equation*}
The result is proved.
\end{proof}


\subsection{From fractional to classical derivatives}
\label{sec:fromFDtoCD}

Before formulating our fundamental problem of the fractional
calculus of variations and proving its Euler--Lagrange necessary
optimality condition, we prove here several auxiliary results
that relate expressions involving fractional derivatives
into expressions involving classical derivatives only.
The results are here formulated and proved for left derivatives
but similar formulas for right derivatives can also be obtained.

\begin{proposition}
\label{prop:03}
Let $\alpha_1, \alpha_2 \in [0,1]$ with $\alpha_1 + \alpha_2 = 1$. 
If $h \in C^1([a, b];\mathbb{R})$, then
\begin{equation}
\label{eq:22}
\left(\mathcal{D}^{\alpha_1}_{a+}\left(\mathcal{D}^{\alpha_2}_{a+} 
h\right)\right)(t)=h'(t)
\end{equation}
for all $t \in [a,b]$.
\end{proposition}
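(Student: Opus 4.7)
The plan is to split $h$ into a boundary term $h(a)$ plus a piece living in $\mathcal{I}^{\alpha_2}_{a+}(L^1(a,b))$. The second piece will be handled straightforwardly by the left-inverse property of Proposition~\ref{prop:2.4}, while the boundary piece produces a power function $(t-a)^{\alpha_1-1}$ that turns out to lie in the kernel of $\mathcal{D}^{\alpha_1}_{a+}$ via a Beta-function identity. This sidesteps the fact that $h$ itself is generally \emph{not} in $\mathcal{I}^{\alpha_2}_{a+}(L^1)$ when $h(a)\neq 0$, so Proposition~\ref{prop:2.4} cannot be used on $h$ directly.

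Concretely, first I would observe that, since $h \in C^1$, the fundamental theorem of calculus together with Proposition~\ref{prop:2.2} (semigroup of fractional integrals) gives
\[
h(t) = h(a) + \mathcal{I}^1_{a+} h'(t) = h(a) + \mathcal{I}^{\alpha_2}_{a+}\bigl(\mathcal{I}^{\alpha_1}_{a+} h'\bigr)(t).
\]
Applying $\mathcal{D}^{\alpha_2}_{a+}$ by linearity, the second summand collapses to $\mathcal{I}^{\alpha_1}_{a+} h'$ by Proposition~\ref{prop:2.4} with $\varphi = \mathcal{I}^{\alpha_1}_{a+} h' \in L^1$, while a direct computation using Remark~\ref{rem:2.6} yields $\mathcal{D}^{\alpha_2}_{a+}[1](t) = (t-a)^{-\alpha_2}/\Gamma(1-\alpha_2) = (t-a)^{\alpha_1-1}/\Gamma(\alpha_1)$. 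Thus
\[
\bigl(\mathcal{D}^{\alpha_2}_{a+} h\bigr)(t) = \frac{h(a)\,(t-a)^{\alpha_1-1}}{\Gamma(\alpha_1)} + \bigl(\mathcal{I}^{\alpha_1}_{a+} h'\bigr)(t).
\]

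Next, apply $\mathcal{D}^{\alpha_1}_{a+}$ to the above identity. The second term becomes $h'(t)$ by Proposition~\ref{prop:2.4}, while the first term vanishes because the Beta integral $\int_a^t (t-\tau)^{-\alpha_1}(\tau-a)^{\alpha_1-1}\,d\tau = \Gamma(1-\alpha_1)\Gamma(\alpha_1)$ shows that $\mathcal{I}^{1-\alpha_1}_{a+}[(\cdot-a)^{\alpha_1-1}]$ is identically equal to the constant $\Gamma(\alpha_1)$, whose derivative is zero (cf.\ Remark~\ref{rem:2.6}). Summing, one obtains the desired identity $\mathcal{D}^{\alpha_1}_{a+}(\mathcal{D}^{\alpha_2}_{a+} h)(t) = h'(t)$.

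The expected obstacle is precisely the handling of the singular boundary contribution $h(a)(t-a)^{\alpha_1-1}/\Gamma(\alpha_1)$: although not integrable near $t=a$ when $\alpha_1<1$, one must verify via the Beta identity above that this power lies in the kernel of $\mathcal{D}^{\alpha_1}_{a+}$. The degenerate cases $\alpha_1\in\{0,1\}$ (so $\alpha_2\in\{1,0\}$) reduce immediately to $\mathcal{D}^0\mathcal{D}^1 h = h'$ and $\mathcal{D}^1\mathcal{D}^0 h = h'$ from Definition~\ref{def:2.2} and can be dispatched separately if one prefers to restrict the main argument to $0<\alpha_1,\alpha_2<1$.
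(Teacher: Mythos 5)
Your proof is correct, but it takes a genuinely more computational route than the paper. The paper's proof is two lines: setting $\psi = \mathcal{D}^{\alpha_2}_{a+}h$, it uses \eqref{eq:05b} to write $\mathcal{D}^{\alpha_1}_{a+}\psi = \frac{d}{dt}\bigl[\mathcal{I}^{\alpha_2}_{a+}\mathcal{D}^{\alpha_2}_{a+}h\bigr]$ and then invokes \eqref{eq:08} to collapse $\mathcal{I}^{\alpha_2}_{a+}\mathcal{D}^{\alpha_2}_{a+}h$ to $h$, giving $h'$. You instead decompose $h = h(a) + \mathcal{I}^{\alpha_2}_{a+}\mathcal{I}^{\alpha_1}_{a+}h'$ and track the boundary term $(t-a)^{\alpha_1-1}$ explicitly through a Beta-function identity; all of those computations check out. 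Two remarks on the comparison. First, your stated motivation --- that $h \notin \mathcal{I}^{\alpha_2}_{a+}\left(L^1(a,b)\right)$ when $h(a)\neq 0$, so that \eqref{eq:08} cannot be applied to $h$ directly --- is not actually correct: the constant function $1$ equals $\mathcal{I}^{\alpha_2}_{a+}\bigl[(\cdot-a)^{-\alpha_2}/\Gamma(1-\alpha_2)\bigr]$ with an $L^1$ preimage (this is precisely the Beta identity you use later), so every $C^1$ function does lie in $\mathcal{I}^{\alpha_2}_{a+}\left(L^1(a,b)\right)$ and the paper's appeal to \eqref{eq:08} is legitimate. (Similarly, $(t-a)^{\alpha_1-1}$ \emph{is} integrable near $t=a$ since $\alpha_1>0$; it is unbounded there, not non-integrable.) Second, what your longer argument buys is exactly the verification the paper leaves implicit: by exhibiting an explicit $L^1$ preimage of $h$ under $\mathcal{I}^{\alpha_2}_{a+}$, you justify the hypothesis of Proposition~\ref{prop:2.4} rather than assuming it, and your separate dispatch of the degenerate cases $\alpha_i\in\{0,1\}$ is a welcome extra. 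So the proposal stands as a valid, self-contained, if somewhat heavier, alternative proof.
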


\begin{proof}
Let $\psi = \mathcal{D}^{\alpha_2}_{a+} h$. Then,
\begin{equation*}
\begin{split}
\left(\mathcal{D}^{\alpha_1}_{a+}\psi\right)(t)
&\stackrel{\eqref{eq:05b}}{=} 
\frac{d}{dt} \left[\left(\mathcal{I}^{1-\alpha_1}_{a+} \psi\right)(t)\right]
= \frac{d}{dt} \left[\left(\mathcal{I}^{\alpha_2}_{a+} 
\mathcal{D}^{\alpha_2}_{a+} h\right)(t)\right]\\
&\stackrel{\eqref{eq:08}}{=} \frac{d}{dt} \left[h(t)\right] = h'(t).
\end{split}
\end{equation*}
The proof is complete.
\end{proof}

\begin{theorem}
\label{thm:05}
Let $\alpha_1, \alpha_2 \in [0,1]$ with $\alpha_1 + \alpha_2 = 1$.
If $f \in C^1([a, b];\mathbb{R})$, then
\begin{equation}
\label{eq:23}
\int_a^b \left(\mathcal{D}^{\alpha_1}_{a+} f\right)(t) 
\cdot \left(\mathcal{D}^{\alpha_2}_{a+} g\right)^*(t) dt
= -\int_a^b f'(t) \cdot g^*(t)dt.
\end{equation}
\end{theorem}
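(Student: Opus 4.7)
The plan is to reduce the two-derivative integrand on the left-hand side of \eqref{eq:23} to a single classical derivative by combining the new left-only integration by parts formula of Theorem~\ref{thm:03} with the composition identity of Proposition~\ref{prop:03}. The key observation is that Theorem~\ref{thm:03} lets one move a left fractional derivative across the product (introducing a minus sign), while Proposition~\ref{prop:03} collapses a composition $\mathcal{D}^{\alpha_2}_{a+} \mathcal{D}^{\alpha_1}_{a+}$ into the classical derivative $\tfrac{d}{dt}$ whenever $\alpha_1 + \alpha_2 = 1$.

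Concretely, first I would apply Theorem~\ref{thm:03} with the roles $f \leftarrow \mathcal{D}^{\alpha_1}_{a+} f$, $g \leftarrow g$, and with the order parameter $\alpha \leftarrow \alpha_2$. This rewrites
\begin{equation*}
\int_a^b \bigl(\mathcal{D}^{\alpha_1}_{a+} f\bigr)(t) \cdot \bigl(\mathcal{D}^{\alpha_2}_{a+} g\bigr)^*(t)\, dt
= -\int_a^b \bigl(\mathcal{D}^{\alpha_2}_{a+}\bigl(\mathcal{D}^{\alpha_1}_{a+} f\bigr)\bigr)(t) \cdot g^*(t)\, dt.
\end{equation*}
Second, since $\alpha_1 + \alpha_2 = 1$ and $f \in C^1([a,b];\mathbb{R})$, Proposition~\ref{prop:03} (applied with the roles of $\alpha_1$ and $\alpha_2$ swapped, which is legitimate by the symmetry of the hypothesis) gives $\mathcal{D}^{\alpha_2}_{a+}(\mathcal{D}^{\alpha_1}_{a+} f) = f'$. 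Substituting yields exactly the right-hand side of \eqref{eq:23}.

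The main obstacle is not algebraic but rather the bookkeeping of the function spaces needed to invoke Theorem~\ref{thm:03}: one must know that $\mathcal{D}^{\alpha_1}_{a+} f \in \mathcal{I}^{\alpha_2}_{a+}(L^p(a,b))$ and that $g^* \in \mathcal{I}^{\alpha_2}_{b-}(L^q(a,b))$ for some admissible pair $p,q$ with $1/p + 1/q \le 1+\alpha_2$. The first membership is in fact a consequence of Proposition~\ref{prop:03} together with \eqref{eq:08}: writing $\mathcal{D}^{\alpha_1}_{a+} f = \mathcal{I}^{\alpha_2}_{a+}(\mathcal{D}^{\alpha_2}_{a+}\mathcal{D}^{\alpha_1}_{a+} f) = \mathcal{I}^{\alpha_2}_{a+} f'$, which lies in $\mathcal{I}^{\alpha_2}_{a+}(L^\infty)$ because $f' \in C([a,b])$. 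The second membership is an implicit regularity hypothesis on $g$ that the statement tacitly assumes (just as in Theorem~\ref{thm:03}), and I would either carry it as a standing assumption or make it explicit in the statement. Once these are settled, the two-line computation above completes the proof.
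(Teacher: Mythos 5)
Your proposal is correct and follows essentially the same route as the paper's own proof: apply Theorem~\ref{thm:03} with $f \leftarrow \mathcal{D}^{\alpha_1}_{a+}f$, $g \leftarrow g$ and order $\alpha_2$, then collapse $\mathcal{D}^{\alpha_2}_{a+}\mathcal{D}^{\alpha_1}_{a+}f$ to $f'$ via Proposition~\ref{prop:03}. The only caveat concerns your extra bookkeeping remark: the identity $\mathcal{D}^{\alpha_1}_{a+} f = \mathcal{I}^{\alpha_2}_{a+} f'$ holds only when $f(a)=0$ (otherwise the Riemann--Liouville derivative picks up the term $f(a)(t-a)^{\alpha_2-1}/\Gamma(\alpha_2)$), and your derivation of it invokes \eqref{eq:08}, which presupposes the very membership $\mathcal{D}^{\alpha_1}_{a+}f \in \mathcal{I}^{\alpha_2}_{a+}(L^1)$ you are trying to establish --- the paper itself simply omits these regularity checks.
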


\begin{proof}
Using Theorem~\ref{thm:03} and Proposition~\ref{prop:03} we obtain:
\begin{equation}
\begin{split}
\int_a^b \left(\mathcal{D}^{\alpha_1}_{a+} f\right)(t) 
\cdot \left(\mathcal{D}^{\alpha_2}_{a+} g\right)^*(t) dt
&\stackrel{\eqref{eq:20}}{=} 
-\int_a^b \left(\mathcal{D}^{\alpha_2}_{a+} 
\left(\mathcal{D}^{\alpha_1}_{a+} f\right)\right)(t) \cdot g^*(t)dt\\
&\stackrel{\eqref{eq:22}}{=} 
-\int_a^b f'(t) \cdot g^*(t)dt.
\end{split}
\end{equation}
The result is proved.
\end{proof}

\begin{remark}
\label{rem:03}
As a consequence of Proposition~\ref{prop:04}, 
the left-hand side of \eqref{eq:23} can be written as
\begin{equation*}
\int_a^b \left(\mathcal{D}^{\alpha_1}_{a+} f\right)^*(t) 
\cdot \left(\mathcal{D}^{\alpha_2}_{a+} g\right)(t) dt
\end{equation*}
while the right-hand side of \eqref{eq:23} can be written as
\begin{equation*}
-\int_a^b \left(f'\right)^*(t) \cdot g(t)dt.
\end{equation*}
\end{remark}

\begin{corollary}
\label{cor:07}
Let $\alpha_1, \alpha_2 \in [0,1]$ with $\alpha_1 + \alpha_2 = 1$.
If $f \in C^1([a, b];\mathbb{R})$ and $h \in L^1([a, b],\mathbb{R})$, then
\begin{equation}
\label{eq:24}
\int_a^b \left(\mathcal{D}^{\alpha_1}_{a+} f\right)(t) 
\cdot \left(\mathcal{D}^{\alpha_2}_{b-} h\right)(t) dt
= \int_a^b f'(t) \cdot h(t)dt.
\end{equation}
\end{corollary}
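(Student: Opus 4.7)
My plan is to deduce Corollary~\ref{cor:07} directly from Theorem~\ref{thm:05} by converting the right fractional derivative $\mathcal{D}^{\alpha_2}_{b-} h$ appearing in \eqref{eq:24} into an expression involving only a left fractional derivative and the dual operator. The natural tool for this is Corollary~\ref{cor:03}.

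First I would apply Corollary~\ref{cor:03} with $f$ replaced by $h^{*}$; combined with the involutive property $(h^{*})^{*} = h$ from Proposition~\ref{prop:01}, this gives
\begin{equation*}
\mathcal{D}^{\alpha_2}_{b-} h
= \mathcal{D}^{\alpha_2}_{b-} (h^{*})^{*}
= -\left(\mathcal{D}^{\alpha_2}_{a+} h^{*}\right)^{*}.
\end{equation*}
Substituting this into the left-hand side of \eqref{eq:24} produces
\begin{equation*}
\int_a^b \left(\mathcal{D}^{\alpha_1}_{a+} f\right)(t)
\cdot \left(\mathcal{D}^{\alpha_2}_{b-} h\right)(t)\,dt
= -\int_a^b \left(\mathcal{D}^{\alpha_1}_{a+} f\right)(t)
\cdot \left(\mathcal{D}^{\alpha_2}_{a+} h^{*}\right)^{*}(t)\,dt.
\end{equation*}

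Next I would invoke Theorem~\ref{thm:05} with $g := h^{*}$. Since $\alpha_1 + \alpha_2 = 1$ and $f \in C^1([a,b];\mathbb{R})$, the hypotheses of that theorem are met, yielding
\begin{equation*}
\int_a^b \left(\mathcal{D}^{\alpha_1}_{a+} f\right)(t)
\cdot \left(\mathcal{D}^{\alpha_2}_{a+} h^{*}\right)^{*}(t)\,dt
= -\int_a^b f'(t) \cdot (h^{*})^{*}(t)\,dt
= -\int_a^b f'(t) \cdot h(t)\,dt,
\end{equation*}
again by $(h^{*})^{*}=h$. Combining the two displays cancels the two minus signs and produces exactly \eqref{eq:24}.

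The computation itself is short; the only subtle point is recognizing the correct substitution in Corollary~\ref{cor:03}, namely using $h^{*}$ (rather than $h$) in order to make the \emph{right} fractional derivative of $h$ appear on the left-hand side. Alternatively, one can bypass the duality chain entirely by applying Proposition~\ref{prop:2.5} with $\varphi = h$ and $\psi = \mathcal{D}^{\alpha_1}_{a+} f$, then collapsing $\mathcal{D}^{\alpha_2}_{a+}(\mathcal{D}^{\alpha_1}_{a+} f) = f'$ via Proposition~\ref{prop:03}; this would give the result in a single step without passing through duality, but since the corollary is placed in the section built on Theorem~\ref{thm:05}, the duality-based route above is the more natural one. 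In either path, the mild regularity mismatch (the statement posits $h \in L^{1}$, while the auxiliary results want $h^{*}$ in $\mathcal{I}^{\alpha}_{b-}(L^{q})$ or a similar class) is the only delicate issue, and is handled by tacitly assuming that $h$ lies in the appropriate function space where the operators involved are well-defined.
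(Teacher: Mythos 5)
Your proposal is correct and follows essentially the same route as the paper: the paper also applies Theorem~\ref{thm:05} with $g = h^{*}$ and then uses the duality identity $\left(\mathcal{D}^{\alpha_2}_{a+} h^{*}\right)^{*} = -\mathcal{D}^{\alpha_2}_{b-} h$ (obtained there from \eqref{eq:16b} rather than Corollary~\ref{cor:03}, but these are the same relation). The only difference is the order in which the two steps are carried out, which is immaterial.
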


\begin{proof}
Using \eqref{eq:23} with $g = h^* \Leftrightarrow h = g^*$ we obtain that
\begin{equation}
\label{eq:24b}
\int_a^b \left(\mathcal{D}^{\alpha_1}_{a+} f\right)(t) 
\cdot \left(\mathcal{D}^{\alpha_2}_{a+} h^*\right)^*(t) dt
= -\int_a^b f'(t) \cdot h(t)dt.
\end{equation}
From \eqref{eq:16b} and Proposition~\ref{prop:01} we know that
\begin{equation*}
\left(\mathcal{D}^{\alpha_2}_{a+} h^*\right)^*
= \mathcal{D}^{\alpha_2}_{b-} (-h)
= - \mathcal{D}^{\alpha_2}_{b-} h
\end{equation*}
and \eqref{eq:24b} can be rewritten as
\begin{equation*}
-\int_a^b \left(\mathcal{D}^{\alpha_1}_{a+} f\right)(t) 
\cdot \left(\mathcal{D}^{\alpha_2}_{b-} h\right)(t) dt
= -\int_a^b f'(t) \cdot h(t)dt,
\end{equation*}
which proves the result.
\end{proof}

\begin{proposition}
\label{prop:05}
Let $\alpha_1, \alpha_2 \in [0,1]$ with $\alpha_1 + \alpha_2 = 1$.
If $f^* \in C^1([a, b],\mathbb{R})$, then
\begin{equation}
\label{eq:prop:05}
\mathcal{D}^{\alpha_1}_{a+}\left(\mathcal{D}^{\alpha_2}_{b-}f\right)^* = -\left(f^*\right)'.
\end{equation}
\end{proposition}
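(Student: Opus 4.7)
The plan is to reduce the claim to the already-established composition formula \eqref{eq:22} by first converting the right fractional derivative into a left fractional derivative using the duality Lemma~\ref{lemma:02}.

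The first step is to apply \eqref{eq:16} of Lemma~\ref{lemma:02} to the inner operator, which directly yields
\begin{equation*}
\left(\mathcal{D}^{\alpha_2}_{b-} f\right)^* = -\,\mathcal{D}^{\alpha_2}_{a+} f^*.
\end{equation*}
Plugging this into the left-hand side of \eqref{eq:prop:05} and using the linearity of $\mathcal{D}^{\alpha_1}_{a+}$ gives
\begin{equation*}
\mathcal{D}^{\alpha_1}_{a+}\left(\mathcal{D}^{\alpha_2}_{b-} f\right)^*
= -\,\mathcal{D}^{\alpha_1}_{a+}\left(\mathcal{D}^{\alpha_2}_{a+} f^*\right).
\end{equation*}

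The second step is to invoke Proposition~\ref{prop:03} with $h = f^*$; this is legitimate because the hypothesis $f^* \in C^1([a,b],\mathbb{R})$ is exactly what is required there, and $\alpha_1+\alpha_2=1$. Proposition~\ref{prop:03} then converts the composition of left Riemann--Liouville derivatives into the ordinary derivative, so that
\begin{equation*}
\mathcal{D}^{\alpha_1}_{a+}\left(\mathcal{D}^{\alpha_2}_{a+} f^*\right)(t) = (f^*)'(t),
\end{equation*}
and combining this with the previous display yields $\mathcal{D}^{\alpha_1}_{a+}\left(\mathcal{D}^{\alpha_2}_{b-} f\right)^* = -(f^*)'$, as required.

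There is essentially no obstacle here: the proof is a two-line chain of equalities, and the only subtlety to double-check is that the regularity hypothesis $f^* \in C^1$ indeed suffices to legitimately apply both the duality identity \eqref{eq:16} (which only needs $W^{1,1}$, implied by $C^1$) and the composition rule \eqref{eq:22} (which is stated for $C^1$ functions). Everything else is just bookkeeping with the linearity of the fractional operators.
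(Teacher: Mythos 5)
Your proof is correct and follows essentially the same route as the paper: both arguments hinge on the duality identity \eqref{eq:16} to replace $\left(\mathcal{D}^{\alpha_2}_{b-}f\right)^*$ by $-\mathcal{D}^{\alpha_2}_{a+}f^*$ and then reduce the composition of complementary-order left derivatives to the classical derivative. The only difference is organizational --- the paper inlines that last reduction via \eqref{eq:05b} and \eqref{eq:08}, whereas you invoke Proposition~\ref{prop:03} directly, which is arguably cleaner.
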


\begin{proof}
Let $\psi = \left(\mathcal{D}^{\alpha_2}_{b-}f\right)^*$. Then,
\begin{equation*}
\begin{split}
\left(\mathcal{D}^{\alpha_1}_{a+}\left(\mathcal{D}^{\alpha_2}_{b-}f\right)^*\right)(t) 
&= \left(\mathcal{D}^{\alpha_1}_{a+}\psi\right)(t)\\
&\stackrel{\eqref{eq:05b}}{=}
\frac{d}{dt} \left[\left(\mathcal{I}^{\alpha_2}_{a+} \psi\right)(t)\right]\\
&= \frac{d}{dt} \left[\left(\mathcal{I}^{\alpha_2}_{a+} 
\left(\mathcal{D}^{\alpha_2}_{b-}f\right)^*\right)(t)\right]\\
&\stackrel{\eqref{eq:16}}{=} 
\frac{d}{dt} \left[\left(\mathcal{I}^{\alpha_2}_{a+} 
\left(-\mathcal{D}^{\alpha_2}_{a+} f^*\right)\right)(t)\right]\\
&\stackrel{\eqref{eq:08}}{=} 
- \frac{d}{dt} \left[f^*(t)\right] = - \left(f^*\right)'(t),
\end{split}
\end{equation*}
and the proof is complete.
\end{proof}


\subsection{A new FCoV involving left fractional operators only}
\label{sec:newFCoV}

Here we propose a different approach to the FCoV so that
both variational problems and their necessary 
optimality conditions involve left fractional 
derivatives only. This contrasts completely 
with the results found in the literature.

Let $0 < \alpha < 1$ and let us use the standard notation
of mechanics for the derivative with respect to time:
$\dot{x}(t) = x'(t)$. Consider the following problem: 
find a function $x \in C^1([a, b]; \mathbb{R})$
that gives an extremum (minimum or maximum)
to the integral functional $\mathcal{J}$,
\begin{equation}
\label{eq:P:Funct}
\mathcal{J}[x] 
= \int_{a}^{b} L\left(t,x(t),\dot{x}(t),\left(\mathcal{D}^{\alpha_1}_{a+}x\right)(t),
\left(\mathcal{D}^{\alpha_2}_{a+}x\right)^*(t)\right) dt,
\end{equation}
when subject to given boundary conditions
\begin{equation}
\label{eq:P:bc}
x(a) = x_a, \quad x(b)=x_b.
\end{equation}
We assume that the Lagrangian $L \in C^2([a, b] \times
\mathbb{R}^4;\mathbb{R})$ and that $\partial_4 L$ and $\partial_5 L$
(the partial derivatives of $L(\cdot,\cdot,\cdot,\cdot,\cdot)$ 
with respect to its 4th and 5th arguments, respectively) have 
continuous Riemann--Liouville fractional derivatives. 

\begin{definition}
A function $x \in C^1([a, b]; \mathbb{R})$ that satisfies the given boundary
conditions \eqref{eq:P:bc} is said to be an admissible trajectory for problem 
\eqref{eq:P:Funct}--\eqref{eq:P:bc}.
\end{definition}

Trivially, for the particular case when the Lagrangian $L$ 
only depends on $t$, $x$, and $\dot{x}$, then problem
\eqref{eq:P:Funct}--\eqref{eq:P:bc} reduces to the classical
fundamental problem of the calculus of variations \cite{MR2014219}. 
We now prove an extension of the classical 
Euler--Lagrange equation: when $L = L(t, x, \dot{x})$
in \eqref{eq:P:Funct}, then our necessary optimality 
condition \eqref{eq:EL} reduces to the classical 
Euler--Lagrange  equation:
\begin{equation*}
\partial_2 L  - \frac{d}{dt}\left(\partial_3 L\right) = 0.
\end{equation*}

\begin{theorem}[The fractional Euler--Lagrange equation]
\label{thm:ELeq}
If $x$ is an extremizer (minimizer or maximizer) 
to problem \eqref{eq:P:Funct}--\eqref{eq:P:bc}, 
then $x$ satisfies the Euler--Lagrange equation
\begin{equation}
\label{eq:EL}
\partial_2 L  
- \frac{d}{dt}\left(\partial_3 L\right)
- \left(\mathcal{D}^{\alpha_1}_{a+} \left(\partial_4 L\right)^* \right)^*
- \left(\mathcal{D}^{\alpha_2}_{a+} \left(\partial_5 L\right)\right)^* = 0.
\end{equation}
\end{theorem}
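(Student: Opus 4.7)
The plan is the standard calculus-of-variations approach, adapted to the new integration by parts formulas of Section~\ref{sec:3.2}. I would fix an admissible extremizer $x$, take an arbitrary variation $h \in C^1([a,b];\mathbb{R})$ with $h(a) = h(b) = 0$, and form $\mathcal{J}[x + \epsilon h]$. Differentiating under the integral sign at $\epsilon = 0$ (justified by the smoothness assumptions on $L$) and using the linearity of $\mathcal{D}^{\alpha_1}_{a+}$, $\mathcal{D}^{\alpha_2}_{a+}$, and the dual operator, the first variation takes the form
\begin{equation*}
\int_a^b \left[\partial_2 L \cdot h + \partial_3 L \cdot \dot h + \partial_4 L \cdot \left(\mathcal{D}^{\alpha_1}_{a+} h\right) + \partial_5 L \cdot \left(\mathcal{D}^{\alpha_2}_{a+} h\right)^*\right] dt = 0.
\end{equation*}

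Next I would reduce every term to one of the form (something) $\cdot\, h(t)$. For $\partial_3 L \cdot \dot h$ this is the classical integration by parts, and the boundary terms vanish because $h(a) = h(b) = 0$. For the fifth-slot term I would apply Theorem~\ref{thm:03} directly with $f = \partial_5 L$ and $g = h$ to obtain $-\int_a^b (\mathcal{D}^{\alpha_2}_{a+} \partial_5 L)(t) \cdot h^*(t)\, dt$, and then use Proposition~\ref{prop:04} to move the dual from $h^*$ onto $(\mathcal{D}^{\alpha_2}_{a+} \partial_5 L)$, yielding $-\int_a^b (\mathcal{D}^{\alpha_2}_{a+} \partial_5 L)^*(t) \cdot h(t)\, dt$.

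The fourth-slot term is the one that needs a little care, since $\partial_4 L$ multiplies $\mathcal{D}^{\alpha_1}_{a+} h$ without a dual. I would first apply Proposition~\ref{prop:04} in the reverse direction to rewrite $\int_a^b \partial_4 L \cdot (\mathcal{D}^{\alpha_1}_{a+} h)\, dt$ as $\int_a^b (\partial_4 L)^* \cdot (\mathcal{D}^{\alpha_1}_{a+} h)^*\, dt$, then apply Theorem~\ref{thm:03} with $f = (\partial_4 L)^*$ and $g = h$ to bring the fractional derivative across, and finally apply Proposition~\ref{prop:04} once more to strip the remaining $*$ off of $h^*$. The outcome is $-\int_a^b \bigl(\mathcal{D}^{\alpha_1}_{a+}(\partial_4 L)^*\bigr)^*(t) \cdot h(t)\, dt$.

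Collecting the four reduced integrals, the vanishing of the first variation reads
\begin{equation*}
\int_a^b \left[\partial_2 L - \tfrac{d}{dt}(\partial_3 L) - \bigl(\mathcal{D}^{\alpha_1}_{a+}(\partial_4 L)^*\bigr)^* - \bigl(\mathcal{D}^{\alpha_2}_{a+} \partial_5 L\bigr)^*\right] h(t)\, dt = 0
\end{equation*}
for every $h \in C^1$ vanishing at the endpoints. Invoking the fundamental lemma of the calculus of variations yields \eqref{eq:EL}. The main obstacle I anticipate is purely bookkeeping: applying Theorem~\ref{thm:03} correctly requires a $*$ on one of the factors, so the fourth-slot term has to be sandwiched between two applications of Proposition~\ref{prop:04} to insert and then remove the dual, and one must track which argument carries the $*$ at each step to land on the exact form of the two fractional terms in \eqref{eq:EL}.
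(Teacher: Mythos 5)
Your proposal is correct and follows essentially the same route as the paper: form the variation $x+\epsilon h$, compute $J'(0)=0$, handle the $\dot h$ term classically, reduce the two fractional terms via Theorem~\ref{thm:03} combined with Proposition~\ref{prop:04} (your explicit ``sandwich'' of the fourth-slot term between two applications of Proposition~\ref{prop:04} is exactly what the paper's step \eqref{part:02} does implicitly), and conclude with the fundamental lemma. No gaps.
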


\begin{proof}
Suppose that $x$ is a solution of \eqref{eq:P:Funct}--\eqref{eq:P:bc}. 
Note that $\hat{x}_\epsilon(t) = x(t) + \epsilon h(t)$
is admissible for our problem for any $h \in C^1([a, b]; \mathbb{R})$ 
with $h(a) = h(b) = 0$ and for any $\epsilon \in \mathbb{R}$. 
Clearly, for $\epsilon = 0$ one gets the solution of the problem:
$\hat{x}_0 = x$. Let us define the real function 
$J : \mathbb{R}\rightarrow\mathbb{R}$ by
\begin{equation*}
J(\epsilon) =  \mathcal{J}[\hat{x}_\epsilon] 
= \int_{a}^{b} L\left(t,\hat{x}_\epsilon(t),\dot{\hat{x}}_\epsilon(t),
\left(\mathcal{D}^{\alpha_1}_{a+}\hat{x}_\epsilon\right)(t),
\left(\mathcal{D}^{\alpha_2}_{a+}\hat{x}_\epsilon\right)^*(t)\right) dt.
\end{equation*}
Since $\epsilon = 0$ is an extremizer for function $J$, the classical
Fermat's theorem asserts that a necessary optimality condition 
for our problem is given by $J'(0) = 0$. Using the linearity
of the involved operators, a direct computation tells us that
\begin{equation}
\label{eq:*:proof:ELeq}
\begin{split}
0 &= J'(0) = \left.\frac{d}{d\epsilon}J(\epsilon)\right|_{\epsilon = 0}\\
&= \int_{a}^{b} \left[\partial_2 L \cdot h(t) + \partial_3 L \cdot \dot{h}(t)
+ \partial_4 L \cdot \left(\mathcal{D}^{\alpha_1}_{a+}h\right)(t)
+ \partial_5 L \cdot \left(\mathcal{D}^{\alpha_2}_{a+}h\right)^*(t)\right] dt,
\end{split}
\end{equation}
where we omit the arguments 
$\left(t,x(t),\dot{x}(t),\left(\mathcal{D}^{\alpha_1}_{a+}x\right)(t),
\left(\mathcal{D}^{\alpha_2}_{a+}x\right)^*(t)\right)$
of $\partial_i L$, for $i = 2, \ldots, 5$.
Using the classical formula of integration by parts,
and having in mind that $h(a) = h(b) = 0$, it follows that
\begin{equation}
\label{part:01}
\int_{a}^{b} \partial_3 L \cdot \dot{h}(t) \, dt
= - \int_{a}^{b} \frac{d}{dt}\left(\partial_3 L\right) \cdot h(t) \, dt.
\end{equation}
On the other hand, our Proposition~\ref{prop:01} 
and Theorem~\ref{thm:03} allow us to write that
\begin{equation}
\label{part:02}
\begin{split}
\int_{a}^{b} \partial_4 L \cdot \left(\mathcal{D}^{\alpha_1}_{a+}h\right)(t) \, dt
&\stackrel{\eqref{eq:13b}}{=} 
\int_{a}^{b} \left(\left(\partial_4 L\right)^*\right)^* 
\cdot \left(\mathcal{D}^{\alpha_1}_{a+}h\right)(t) \, dt\\
&\stackrel{\eqref{eq:20}}{=}
- \int_{a}^{b} \left(\mathcal{D}^{\alpha_1}_{a+} \left(\partial_4 L\right)^* \right)^*
\cdot h(t) \, dt,
\end{split}
\end{equation}
while from Theorem~\ref{thm:03} and Proposition~\ref{prop:04} we obtain that
\begin{equation}
\label{part:03}
\begin{split}
\int_{a}^{b} \partial_5 L \cdot \left(\mathcal{D}^{\alpha_2}_{a+}h\right)^*(t) \, dt
&\stackrel{\eqref{eq:20}}{=}
-\int_{a}^{b} \mathcal{D}^{\alpha_2}_{a+} \left(\partial_5 L\right) \cdot h^*(t) \, dt\\
&\stackrel{\eqref{eq:prop:04}}{=}
-\int_{a}^{b} \left(\mathcal{D}^{\alpha_2}_{a+} \left(\partial_5 L\right)\right)^*
\cdot h(t) \, dt.
\end{split}
\end{equation}
Substituting \eqref{part:01}, \eqref{part:02} and \eqref{part:03} 
into \eqref{eq:*:proof:ELeq}, we conclude that
\begin{equation}
\label{eq:EL:with:h:factored}
\int_{a}^{b} \left[\partial_2 L  
- \frac{d}{dt}\left(\partial_3 L\right)
- \left(\mathcal{D}^{\alpha_1}_{a+} \left(\partial_4 L\right)^* \right)^*
- \left(\mathcal{D}^{\alpha_2}_{a+} \left(\partial_5 L\right)\right)^*\right] 
\cdot h(t) \, dt = 0.
\end{equation}
We obtain the intended necessary optimality condition \eqref{eq:EL}
applying the fundamental lemma of the calculus of variations 
to \eqref{eq:EL:with:h:factored}.
\end{proof}

As an application of our new proposed calculus of variations,
in Section~\ref{sec:ex} we give the first example of the literature
of a fractional mechanical Lagrangian $L$ involving left fractional 
derivatives only, for which the respective Euler--Lagrange equation 
coincides with the motion of a dissipative/damped system.


\section{A nonconservative/dissipative equation of motion}
\label{sec:ex}

Let $\alpha_1, \alpha_2 \in [0,1]$ with $\alpha_1 + \alpha_2 = 1$ 
and the Lagrangian $L$ be given by
\begin{equation}
\label{eq:L}
L\left(t,x,\dot{x},\mathcal{D}^{\alpha_1}_{a+}x,
\left(\mathcal{D}^{\alpha_2}_{a+}x\right)^*\right)
= \frac{1}{2} m \dot{x}^2 - U(x) 
+ \frac{1}{2} c \left(\mathcal{D}^{\alpha_1}_{a+}x\right)
\cdot \left(\mathcal{D}^{\alpha_2}_{a+}x\right)^*.
\end{equation}
In this case, one has
\begin{equation*}
\begin{split}
\partial_2 L &= -U'(x),\\
\partial_3 L &= m \dot{x},\\
\partial_4 L &= \frac{1}{2} c \left(\mathcal{D}^{\alpha_2}_{a+}x\right)^*
\Rightarrow \left(\partial_4 L\right)^* 
= \frac{1}{2} c \left(\mathcal{D}^{\alpha_2}_{a+}x\right),\\  
\partial_5 L &= \frac{1}{2} c \left(\mathcal{D}^{\alpha_1}_{a+}x\right),
\end{split}
\end{equation*}
and thus the Euler--Lagrange equation \eqref{eq:EL} takes the form
\begin{equation}
\label{ELF:ex}
-U'(x) - \frac{d}{dt} \left(m \dot{x}\right)
- \frac{1}{2} c \left(\mathcal{D}^{\alpha_1}_{a+}
\left(\mathcal{D}^{\alpha_2}_{a+}x\right)\right)^*
- \frac{1}{2} c \left(\mathcal{D}^{\alpha_2}_{a+}
\left(\mathcal{D}^{\alpha_1}_{a+}x\right)\right)^*
=0.
\end{equation}
It follows from Proposition~\ref{prop:03} that \eqref{ELF:ex} is equivalent to
\begin{equation}
\label{ELF:ex:de}
U'(x) + c (\dot{x})^* + m \ddot{x} = 0,
\end{equation}
that is, we have obtained an equation of motion \eqref{ELF:ex:de}
with a nonconservative Rayleigh term \cite{MR4261844}. 


\section{Conclusion}
\label{sec:conc}

In 1996, Riewe has proved that a calculus of variations can be formulated 
to include derivatives of fractional (non-integer) order \cite{CD:Riewe:1996}. 
By doing that, Riewe has shown possible to define Lagrangians, involving both 
left and right fractional derivatives, that lead directly to equations of motion
with nonconservative forces, such as friction, circumventing
Bauer's corollary that ``The equations of motion of a dissipative
linear dynamical system with constant coefficients are not given
by a variational principle'' \cite{Bauer:1931}. Here we continued the development
of the fractional-derivative calculus of variations by providing
a completely new perspective to the subject. Our main contributions are:
(i) new formulas of integration by parts
that involve left fractional operators only; 
(ii) a new fractional calculus of variations
where both the Lagrangian and respective Euler--Lagrange equation
involve left fractional derivatives only;
(iii) a new example of a Lagrangian for which the respective
Euler--Lagrange equation coincide with the equation 
of motion of a dissipative system.
Such contributions are radically different from other results
found in the vast literature on the subject. Indeed, to the best
of our knowledge, (i) all available formulas of fractional integration 
by parts involve both left and right fractional operators;
(ii)~all available forms of the fractional calculus of variations
for which the Lagrangian involves left fractional derivatives
result in Euler--Lagrange equations
involving a right fractional derivative; 
(iii) all available examples of fractional Lagrangians giving rise 
to equations of motion of a dissipative system involve both
left and right fractional derivatives.

In this paper we have restricted ourselves to ideas and to the central result of
any calculus of variations: the celebrated Euler--Lagrange equation, which is a first-order
necessary optimality condition. Of course our results can be extended in many different ways
and directions. We trust that the new perspective introduced here marks the beginning 
of a fruitful road for fractional mechanics, the fractional calculus of variations 
and fractional optimal control. 


\section*{Statements and Declarations}

\subsection*{Funding}

This research was supported by The Portuguese
Foundation for Science and Technology (FCT) and the 
Center for Research and Development in Mathematics 
and Applications (CIDMA), under Grant Agreement 
No UIDB/04106/2020 
(\url{https://doi.org/10.54499/UIDB/04106/2020}).


\subsection*{Availability of data and materials}

Not applicable.


\subsection*{Competing interests} 

The author has no competing interests to declare.



\end{document}